\documentclass[a4paper, 11 pt]{article}

\usepackage{amsfonts,amsmath,amssymb,hhline,ifthen,amsthm,graphics,cite,latexsym, caption2}
\usepackage{latexsym}
\usepackage[T2A]{fontenc}
\usepackage[cp1251]{inputenc}
\usepackage[english]{babel}
\usepackage[matrix, arrow, curve]{xy}
\usepackage{graphicx}



\newcounter{q3}
\newcommand{\dsm}[3]{
{\if#20{\if#31{\frac{\partial #1}{\partial y}}\else
          {\frac{\partial^{#3} #1}{\partial y^{#3}}}
        \fi}\else
  {\if#30{\if#21{\frac{\partial #1}{\partial x}}\else
            {\frac{\partial^{#2} #1}{\partial x^{#2}}}
          \fi}\else
    {\setcounter{q3}{#2}\addtocounter{q3}{#3}
    \frac{\partial{\if{1}\arabic{q3}^{}\else^{ \arabic{q3} }\fi}#1}
    {{\if#20\else{\partial x{\if#21\else{^{#2}}\fi}}\fi}
     {\if#30\else{\partial y\if#31\else{^{#3}}\fi}\fi} }}
   \fi}
\fi} }

\newcommand{\dzm}[2]{
{\if#10{\if#21{\frac{\partial}{\partial\overline{\zeta}}}\else
          {\frac{\partial^{#2}}{\partial\overline{\zeta}^{#2}}}
        \fi}\else
  {\if#20{\if#11{\frac{\partial}{\partial\zeta}}\else
            {\frac{\partial^{#1}}{\partial\zeta^{#1}}}
          \fi}\else
    {\setcounter{q3}{#1}\addtocounter{q3}{#2}
    \frac{\partial{\if{1}\arabic{q3}^{}\else^{ \arabic{q3} }\fi}}
    {{\if#10\else{\partial\zeta{\if#21\else{^{#1}}\fi}}\fi}
     {\if#20\else{\partial\overline{\zeta}\if#21\else{^{#2}}\fi}\fi} }}
   \fi}
\fi} }

\newcounter{q2}


\newtheoremstyle{theor}
  {\medskipamount}
  {\medskipamount}
  {\itshape}
  {}
  {\bfseries}
  {.}
  {.5em}
  {}

\newtheorem{definition}{Definition}[section]
\newtheorem{theorem}[definition]{Theorem}
\newtheorem{lemma}[definition]{Lemma}

\newtheorem{corollary}[definition]{Corollary}

\theoremstyle{definition}
\newtheorem{remark}[definition]{Remark}
\newtheorem{example}[definition]{Example}

\numberwithin{equation}{section}

\makeindex

\newtheoremstyle{remarks}
  {0mm}
  {0mm}
  {\itshape}
  {}
  {\itshape}
  {.}
  {.5em}
  {}
\marginparsep = 1mm \marginparpush = 1mm \marginparwidth = 1cm

\pagestyle{plain} \topmargin = -1.54 cm \textheight = 22.65 cm %
\oddsidemargin = 0.46cm \textwidth = 15.5 cm %
\headheight = 0 cm \headsep = 10mm %


\makeatletter \@addtoreset{equation}{section} \makeatother

\begin{document}

\subsection*{\center TWO IDEALS CONNECTED WITH STRONG RIGHT UPPER POROSITY AT A POINT }\begin{center}\textbf{V. Bilet, O. Dovgoshey, J. Prestin} \end{center}
\parshape=5
1cm 13.5cm 1cm 13.5cm 1cm 13.5cm 1cm 13.5cm 1cm 13.5cm \noindent \small {\bf Abstract.} Let $\textbf{\emph{SP}}$ be the set of upper strongly porous at $0$ subsets of $\mathbb R^{+}$ and let $\hat I(\textbf{\emph{SP}})$ be the intersection of maximal ideals $\textbf{\emph{I}}\subseteq\textbf{\emph{SP}}$. Some characteristic properties of sets $E\in\hat I(\textbf{\emph{SP}})$ are obtained. It is shown that the ideal generated by the so-called  completely strongly porous at $0$ subsets of $\mathbb R^{+}$ is a proper subideal of $\hat I(\textbf{\emph{SP}}).$

\bigskip

\parshape=5
1cm 13.5cm 1cm 13.5cm 1cm 13.5cm 1cm 13.5cm 1cm 13.5cm \noindent \small {\bf Key words:} one-side porosity, local strong upper porosity, completely strongly porous set, ideal.

\parshape=2
1cm 15.5cm 1cm 15.5cm  \noindent \small {\bf }

 \bigskip
\textbf{ AMS 2010 Subject Classification: } 28A10, 28A05

\section {Introduction } \hspace*{\parindent}
The basic ideas concerning the notion of set porosity for the first time appeared in some early works of Denjoy \cite{D1},  \cite{D2} and Khintchine  \cite{Kh} and then arose independently in the study of cluster sets in 1967 (Dol\v{z}enko \cite{Dol1}).
A useful collection of facts related to the notion of porosity can be found, for example, in \cite{H}, \cite{HV}, \cite{Th} and \cite{Tk}. The porosity appears naturally in many problems and plays an implicit role in various arias of analysis (e. g., the cluster sets \cite{Z}, the Julia sets \cite{PR}, the quasisymmetric maps \cite{Va},  the differential theory \cite{KPS}, the theory of generalized subharmonic functions \cite{DRih} and so on). The reader can also consult \cite{Z3} and \cite{Z4} for more information.

The porosity found interesting applications in connection with ideals of sets. Well-known results for ideals of compact sets can be found, for example, in \cite{Kech} and \cite{KLW}. In many papers the authors investigate different characteristics (set-theoretic, descriptive, analytic) of the ideals of porous sets (see, e. g., \cite{Rep}, \cite{ZZ1}, \cite{ZZ2}). Some questions related to the order isomorphism between the principal ideals of porous sets of $\mathbb R$ were studied in \cite{SF}. Our paper is also a contribution to this line of research, in particular, we investigate two ideals, whose elements are upper strongly porous at $0$ subsets of $\mathbb R^{+}.$

\section{Right upper porosity at a point}
\hspace*{\parindent}
Let us recall the definition of the right upper porosity at a point. Let $E$ be a subset of $\mathbb R^{+}=[0, \infty).$

\begin{definition}\label{D1.1}
The right upper porosity of $E$ at $0$ is the number
\begin{equation}\label{E1.1}
p^{+}(E,0):=\limsup_{h\to 0^{+}}\frac{\lambda(E,0,h)}{h}
\end{equation}
where $\lambda(E,0,h)$ is the length of the largest open subinterval of $(0,h),$ which could be the empty set $\varnothing,$ that
contains no point of $E$. The set $E$ is porous on the right at $0$ if $p^{+}(E, 0)>0$ and $E$ is strongly porous on the right at $0$ if $p^{+}(E,0)=1.$
\end{definition}

For the remaining of the paper, when the porosity is considered, this will always be assumed to be the right upper porosity at $0.$

\medskip

For $E\subseteq\mathbb R^{+}$ define the subsets $\tilde E$ and  $\tilde H(E)$ of the set of sequences $\tilde h=\{h_n\}_{n\in\mathbb N}$ with $h_n\downarrow 0$ by the rules:
\begin{equation}\label{E1.2*}
(\tilde h\in\tilde E)\Leftrightarrow \left ( h_n\in E\setminus\{0\} \,\, \mbox{for all}\, n\in\mathbb N\right),
\end{equation}
and
\begin{equation}\label{E1.2}
(\tilde h\in\tilde H(E))\Leftrightarrow \left ( \frac{\lambda(E,0,h_n)}{h_n}\rightarrow
1 \quad \mbox{with} \quad n\rightarrow\infty\right),
\end{equation}
where the number $\lambda(E,0, h_n)$ is the same as in
Definition~\ref{D1.1}.

Define also an \emph{equivalence relation} $\asymp$ on the set of sequences of positive
numbers as follows. Let $\tilde a=\{a_n\}_{n\in\mathbb N}$ and
$\tilde{\gamma}=\{\gamma_n\}_{n\in\mathbb N}.$ Then $\tilde a \asymp \tilde {\gamma}$ if
there are positive constants $c_1$ and $c_2
$ such that
\begin{equation*}\label{equiv1}
c_1 a_n \le \gamma_n \le c_2 a_n
\end{equation*}
for all $n\in\mathbb N.$

\begin{definition}\label{D1.2}
Let $E\subseteq\mathbb R^{+}.$ The set $E$ is completely strongly porous on the right at $0$ if for every $\tilde \tau=\{\tau_n\}_{n\in\mathbb N}\in\tilde E$ there is $\tilde h=\{h_n\}_{n\in\mathbb N}\in\tilde H(E)$ such that $\tilde\tau\asymp\tilde h.$
\end{definition}

In what follows we denote by $\textbf{\emph{SP}}$ and $\textbf{\emph{CSP}}$
the collection (i.e., the set) of sets $E\subseteq\mathbb R^{+}$ which are strongly porous on the right at $0$ and completely strongly porous on the right at $0$ respectively. The set $\textbf{\emph{CSP}}$ was introduced and studied in \cite{DB} with slightly different, but equivalent definition.



\begin{definition}\label{D1.5}
Let $E\subseteq\mathbb R^{+}$ and $q>1.$ The $q$-blow up of $E$ is the set $$E(q):=\bigcup_{x\in E}(q^{-1}x, qx).$$
\end{definition}

The goal of the paper is to find some blow up characterizations for the intersection of maximal ideals $\textbf{\emph{I}}\subseteq\textbf{\emph{SP}}$ and for the ideal generated by $\textbf{\emph{CSP}}.$

\section{Down sets and ideals}
\hspace*{\parindent}Let $\textbf{\emph{A}}$ be a collection of sets. We shall say that $\textbf{\emph{A}}$ is a \emph{down set} if the implication

\begin{equation}\label{E1.5}
\left(B\in \textbf{\emph{A}}\,\, \wedge \,\,C\subseteq B\right)\Rightarrow \left(C\in \textbf{\emph{A}}\right)
\end{equation}
holds for all sets $C$ and $B$ (cf. \cite[p. 29]{Har}). If $\mbox{\boldmath$\varGamma$}$ is an arbitrary collection of sets, we write $$V=V(\mbox{\boldmath$\varGamma$}):=\bigcup_{ A\in \mbox{\scriptsize\boldmath$\varGamma$}}A.$$

\medskip


\begin{definition}
A collection $\textbf{I}$ of subsets of a set $X$ is an ideal on $X$ if the following conditions hold:
\newline $\emph{(i)}$ $\textbf{I}$ is a down set;
\newline $\emph{(ii)}$ $B\cup C\in \textbf{I}$ for all $B, C\in \textbf{I};$
\newline $\emph{(iii)}$ $X\not \in\textbf{I}.$
\end{definition}

Let $\mbox{\boldmath$\varGamma$}$ be a down set.
Define a set $\emph{I}(\mbox{\boldmath$\varGamma$})\subseteq 2^{V}$ by the rule

\begin{equation}\label{E1.7}
(B\in\emph{I}(\mbox{\boldmath$\varGamma$}))\Leftrightarrow(\exists \,\,n\in\mathbb N\,\,\exists\,\, A_1, ..., A_n\in\mbox{\boldmath$\varGamma$}: B=\bigcup_{j=1}^{n}A_j).
\end{equation}
If $V\not\in\emph{I}(\mbox{\boldmath$\varGamma$}),$ then $\emph{I}(\mbox{\boldmath$\varGamma$})$ is an ideal on $V$ such that $\mbox{\boldmath$\varGamma$}\subseteq\emph{I}(\mbox{\boldmath$\varGamma$})$ and the implication $$(\mbox{\boldmath$\varGamma$}\subseteq\mbox{\boldmath$\mathfrak{I}$})\Rightarrow (\emph{I}(\mbox{\boldmath$\varGamma$})\subseteq\mbox{\boldmath$\mathfrak{I}$})$$ holds for every ideal $\mbox{\boldmath$\mathfrak{I}$}$ on $V.$ In what follows we shall say that $\emph{I}(\mbox{\boldmath$\varGamma$})$ is the \emph{ideal generated by $\mbox{\boldmath$\varGamma$}.$}

\begin{definition}\label{D1.4}
Let $\mbox{\boldmath$\varGamma$}$ be an arbitrary collection of sets. An ideal $\textbf{I}$ on $V=V(\mbox{\boldmath$\varGamma$})$ is $\mbox{\boldmath$\varGamma$}$-maximal if $\textbf{I}\subseteq\mbox{\boldmath$\varGamma$}$ and the implication
\begin{equation}\label{E1.8}
(\textbf{I}\subseteq\mbox{\boldmath$\mathfrak{I}$}\subseteq\mbox{\boldmath$\varGamma$})\Rightarrow(\textbf{I}=\mbox{\boldmath$\mathfrak{I}$})
\end{equation} holds for every ideal $\mbox{\boldmath$\mathfrak{I}$}$ on $V.$
\end{definition}

Write $M(\mbox{\boldmath$\varGamma$})$ for the set of $\mbox{\boldmath$\varGamma$}$-maximal ideals and define an ideal $\hat{I} (\mbox{\boldmath$\varGamma$})$ as
\begin{equation}\label{E1.9}
\hat{I} (\mbox{\boldmath$\varGamma$}):=\bigcap_{\textbf{\emph{I}}\in M\left(\mbox{\scriptsize\boldmath$\varGamma$}\right)}\textbf{\emph{I}},
\end{equation}
i. e., $\hat{I} (\mbox{\boldmath$\varGamma$})$ is the intersection of $\mbox{\boldmath$\varGamma$}$-maximal ideals.

\medskip

The paper contains the following main results.

\medskip

\noindent $\bullet$ \emph{A characteristic property of sets which belong to the intersection $\hat{I} (\mbox{\boldmath$\varGamma$})$ of $\mbox{\boldmath$\varGamma$}$-maximal ideals with an arbitrary down set $\mbox{\boldmath$\varGamma$}$.} (See Theorem~\ref{L2.2}).

\medskip

\noindent $\bullet$ \emph{The blow up characterizations of the ideals $\hat I(\textbf{SP})$ and $I(\textbf{CSP}).$} (See theorems \ref{Th2.20} and \ref{Th2.19}).

\medskip

\noindent $\bullet$ \emph{The proper inclusion
$
I(\textbf{CSP})\subset\hat{I}(\textbf{SP}).
$} (See Corollary~\ref{Col(result)} and Example~\ref{ex(result)}).


\begin{remark}\label{Rem1}
The sets $\textbf{\emph{SP}}$ and  $\textbf{\emph{CSP}}$ are down sets and no one from these sets is an ideal on $\mathbb R^{+}.$ The definition of ideals via the down sets seems to be new, but it is easy to prove that this definition is equivalent to the usual definition of the ideals of sets. (See, for example, \cite[p. 202]{HI}).
\end{remark}
\begin{remark}\label{Rem1*}
The $\mbox{\boldmath$\varGamma$}$-maximal ideals are a generalization of the prime ideals. Indeed, if $\mbox{\boldmath$\varGamma$}=2^{V}$ and $\textbf{\emph{I}}$ is an ideal on $V,$ then it may be proved that $\textbf{\emph{I}}$ is a prime ideal on $V$ if and only if $\textbf{\emph{I}}$ is $\mbox{\boldmath$\varGamma$}$-maximal.
\end{remark}

\section {A property of the intersection of $\mbox{\boldmath$\varGamma$}$-maximal ideals}
\hspace*{\parindent}

We start with a useful property of an arbitrary $\mbox{\boldmath$\varGamma$}$-maximal ideal.

\begin{lemma}\label{L2.1}
Let $\mbox{\boldmath$\varGamma$}$ be a nonempty collection of sets. The following two statements are equivalent:
\begin{enumerate}
\item[\rm(i)]\textit{$\mbox{\boldmath$\varGamma$}$ is a down set and $V(\mbox{\boldmath$\varGamma$})\not\in\mbox{\boldmath$\varGamma$};$}

\item[\rm(ii)]\textit{For every $A\in\mbox{\boldmath$\varGamma$}$ there exists a $\mbox{\boldmath$\varGamma$}$-maximal ideal $\textbf{I}$ such that $A\in\textbf{I}.$}
\end{enumerate}
\end{lemma}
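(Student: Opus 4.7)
The plan is to handle the two implications separately. For $(i)\Rightarrow(ii)$ I will invoke Zorn's lemma on a suitable poset of ideals on $V=V(\mbox{\boldmath$\varGamma$})$, sandwiched between a principal ideal and $\mbox{\boldmath$\varGamma$}$ itself. For $(ii)\Rightarrow(i)$ I will read off the two required properties of $\mbox{\boldmath$\varGamma$}$ directly from the existence of $\mbox{\boldmath$\varGamma$}$-maximal ideals containing each element of $\mbox{\boldmath$\varGamma$}$.

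To carry out $(i)\Rightarrow(ii)$, fix $A\in\mbox{\boldmath$\varGamma$}$ and consider $\textbf{I}_{0}:=2^{A}$, the collection of all subsets of $A$. Since $A\subseteq V$ whereas $V\not\in\mbox{\boldmath$\varGamma$}$, we have $A\neq V$, so $V\not\in 2^{A}$; the set $2^{A}$ is evidently a down set closed under finite unions, and hence is an ideal on $V$. Moreover $2^{A}\subseteq\mbox{\boldmath$\varGamma$}$ because $\mbox{\boldmath$\varGamma$}$ is a down set containing $A$. Now let $\mathcal{P}$ be the set of all ideals $\mbox{\boldmath$\mathfrak{I}$}$ on $V$ with $\textbf{I}_{0}\subseteq\mbox{\boldmath$\mathfrak{I}$}\subseteq\mbox{\boldmath$\varGamma$}$, ordered by inclusion. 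Then $\mathcal{P}$ is nonempty, and the union of any chain in $\mathcal{P}$ again lies in $\mathcal{P}$: the down-set and finite-union properties transfer to the union, while $V\not\in\mbox{\boldmath$\varGamma$}$ forces $V$ out of the chain union. By Zorn's lemma there is a maximal element $\textbf{I}\in\mathcal{P}$. Any ideal $\mbox{\boldmath$\mathfrak{J}$}$ on $V$ with $\textbf{I}\subseteq\mbox{\boldmath$\mathfrak{J}$}\subseteq\mbox{\boldmath$\varGamma$}$ automatically contains $\textbf{I}_{0}$, so lies in $\mathcal{P}$, and maximality forces $\mbox{\boldmath$\mathfrak{J}$}=\textbf{I}$; thus $\textbf{I}$ is $\mbox{\boldmath$\varGamma$}$-maximal and contains $A$.

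For $(ii)\Rightarrow(i)$, the two required conclusions fall out separately. To see that $\mbox{\boldmath$\varGamma$}$ is a down set, take $B\in\mbox{\boldmath$\varGamma$}$ and $C\subseteq B$; by $(ii)$ some $\mbox{\boldmath$\varGamma$}$-maximal ideal $\textbf{I}$ contains $B$, and since $\textbf{I}$ is itself a down set, $C\in\textbf{I}\subseteq\mbox{\boldmath$\varGamma$}$. To see that $V\not\in\mbox{\boldmath$\varGamma$}$, suppose for contradiction that $V\in\mbox{\boldmath$\varGamma$}$; then $(ii)$ would produce an ideal on $V$ containing $V$, contradicting axiom $(iii)$ in the definition of an ideal.

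The only delicate point is the verification of the Zorn hypotheses in $(i)\Rightarrow(ii)$: the assumption $V\not\in\mbox{\boldmath$\varGamma$}$ is needed both to guarantee that $2^{A}$ is genuinely an ideal on $V$ (i.e., $A\neq V$) and to keep $V$ out of chain unions so that the union remains an ideal. The rest is bookkeeping on the definitions.
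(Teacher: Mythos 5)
Your proposal is correct and follows essentially the same route as the paper: the direction $(ii)\Rightarrow(i)$ is read off from $2^{A}\subseteq\textbf{\emph{I}}\subseteq\mbox{\boldmath$\varGamma$}$ and from axiom (iii) of ideals, and $(i)\Rightarrow(ii)$ applies Zorn's Lemma to ideals squeezed between $2^{A}$ and $\mbox{\boldmath$\varGamma$}$. The only difference is that you spell out the Zorn argument (chain unions remain ideals contained in $\mbox{\boldmath$\varGamma$}$, and a maximal element of that poset is $\mbox{\boldmath$\varGamma$}$-maximal) which the paper leaves implicit.
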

\begin{proof}
$\textrm{(ii)}\Rightarrow \textrm{(i)}$. Assume that $\textrm{(ii)}$ holds. Let $A\in\mbox{\boldmath$\varGamma$}.$ Using $\textrm{(ii)},$ we can find a $\mbox{\boldmath$\varGamma$}$-maximal ideal $\textbf{\emph{I}}\ni A$. Then $2^{A}\subseteq\textbf{\emph{I}}\subseteq\mbox{\boldmath$\varGamma$}$ holds. Hence $\mbox{\boldmath$\varGamma$}$ is a down set. Suppose now that $V\in\mbox{\boldmath$\varGamma$}.$ By (ii), there is a $\mbox{\boldmath$\varGamma$}$-maximal ideal $\textbf{\emph{I}}$ such that
\begin{equation}\label{E2.0}
V\in\textbf{\emph{I}}.
\end{equation}
The ideal $\textbf{\emph{I}}$ is an ideal on $V.$ Hence $V\not\in\textbf{\emph{I}},$ contrary to \eqref{E2.0}.

$\textrm{(i)}\Rightarrow \textrm{(ii)}$. Suppose that (i) holds. Let $A\in\mbox{\boldmath$\varGamma$}.$ Then $2^{A}\subseteq\mbox{\boldmath$\varGamma$}$ and $2^{A}$ is an ideal on $V.$ Using Zorn's Lemma, we can find a $\mbox{\boldmath$\varGamma$}$-maximal ideal  $\textbf{\emph{I}}$ such that $\textbf{\emph{I}}\supseteq 2^{A}.$ It is clear that $A\in\textbf{\emph{I}}$ holds. The implication $\textrm{(i)}\Rightarrow \textrm{(ii)}$ follows.
\end{proof}

Let $\mbox{\boldmath$\varGamma$}$ be a collection of sets. We shall denote by $I^{*}(\mbox{\boldmath$\varGamma$})$ the collection of sets $S$ satisfying the condition
\begin{equation}\label{E2.1}
S\cup B\in\mbox{\boldmath$\varGamma$}
\end{equation} for every $B\in\mbox{\boldmath$\varGamma$}.$
\begin{remark}\label{rem4.1*}
It is clear that $I^{*}(\mbox{\boldmath$\varGamma$})$ is a down set, if $\mbox{\boldmath$\varGamma$}$ is a down set.
\end{remark}
\begin{lemma}\label{L(new)}
If $\mbox{\boldmath$\varGamma$}$ is a nonempty collection of sets, then
\begin{equation*}
(V(\mbox{\boldmath$\varGamma$})\in\mbox{\boldmath$\varGamma$})\Leftrightarrow (V(\mbox{\boldmath$\varGamma$})\in I^{*}(\mbox{\boldmath$\varGamma$}))
\end{equation*}
holds.
\end{lemma}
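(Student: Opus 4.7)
The plan is to unfold the definitions and exploit the single fact that every member of $\mbox{\boldmath$\varGamma$}$ is a subset of $V(\mbox{\boldmath$\varGamma$})$, so absorbing $V(\mbox{\boldmath$\varGamma$})$ into a union leaves $V(\mbox{\boldmath$\varGamma$})$ unchanged. Write $V=V(\mbox{\boldmath$\varGamma$})$ for brevity. By construction $V=\bigcup_{A\in\mbox{\boldmath$\varGamma$}}A$, so $B\subseteq V$ and therefore $V\cup B=V$ for every $B\in\mbox{\boldmath$\varGamma$}$.

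For the forward implication, I assume $V\in\mbox{\boldmath$\varGamma$}$ and verify the defining condition \eqref{E2.1} for $S=V$: for any $B\in\mbox{\boldmath$\varGamma$}$, the identity $V\cup B=V$ together with $V\in\mbox{\boldmath$\varGamma$}$ gives $V\cup B\in\mbox{\boldmath$\varGamma$}$, hence $V\in I^{*}(\mbox{\boldmath$\varGamma$})$.

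For the reverse implication, I use the nonemptiness of $\mbox{\boldmath$\varGamma$}$. Pick any $B\in\mbox{\boldmath$\varGamma$}$. By the assumption $V\in I^{*}(\mbox{\boldmath$\varGamma$})$ and the definition of $I^{*}(\mbox{\boldmath$\varGamma$})$, we have $V\cup B\in\mbox{\boldmath$\varGamma$}$. Since $V\cup B=V$, this is exactly $V\in\mbox{\boldmath$\varGamma$}$.

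There is no genuine obstacle here; the statement is essentially a tautology once one notices the absorption $V\cup B=V$. The only point requiring care is invoking the hypothesis that $\mbox{\boldmath$\varGamma$}$ is nonempty in the direction $(\Leftarrow)$, since without an actual $B\in\mbox{\boldmath$\varGamma$}$ the defining quantifier in \eqref{E2.1} would be vacuous and would not let us conclude $V\in\mbox{\boldmath$\varGamma$}$.
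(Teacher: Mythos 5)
Your proof is correct and follows essentially the same route as the paper: both directions rest on the absorption $V\cup B=V$ for $B\in\mbox{\boldmath$\varGamma$}$, with nonemptiness used to pick some $B\in\mbox{\boldmath$\varGamma$}$ in the reverse direction. Your explicit remark on why nonemptiness is needed is a minor clarification the paper leaves implicit, but the argument is the same.
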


\begin{proof}
Let $V\in\mbox{\boldmath$\varGamma$}.$ Then we have
$
B\cup V=V\in\mbox{\boldmath$\varGamma$}
$
for every $B\in\mbox{\boldmath$\varGamma$}.$ Hence $V\in I^{*}(\mbox{\boldmath$\varGamma$}).$

$\quad\,$ Let now $V\in I^{*}(\mbox{\boldmath$\varGamma$})$ and $B\in\mbox{\boldmath$\varGamma$}.$ The inclusion $B\subseteq V$ holds. Thus, $V=B\cup V\in\mbox{\boldmath$\varGamma$}.$
\end{proof}

\begin{theorem}\label{L2.2}
Let $\mbox{\boldmath$\varGamma$}$ be a nonempty down set and let
\begin{equation}\label{E2.2}
V(\mbox{\boldmath$\varGamma$})\not\in \mbox{\boldmath$\varGamma$}.
\end{equation} Then the equality
\begin{equation}\label{E2.3}
I^{*}(\mbox{\boldmath$\varGamma$})=\hat{I}(\mbox{\boldmath$\varGamma$})
\end{equation} holds where $\hat{I}(\mbox{\boldmath$\varGamma$})$ is defined by \eqref{E1.9}.
\end{theorem}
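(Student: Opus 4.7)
The plan is to establish the two inclusions $I^{*}(\mbox{\boldmath$\varGamma$})\subseteq\hat{I}(\mbox{\boldmath$\varGamma$})$ and $\hat{I}(\mbox{\boldmath$\varGamma$})\subseteq I^{*}(\mbox{\boldmath$\varGamma$})$ separately, using Lemma~\ref{L2.1} to guarantee the existence of the relevant $\mbox{\boldmath$\varGamma$}$-maximal ideals. Note first that Lemma~\ref{L2.1} applies because $\mbox{\boldmath$\varGamma$}$ is a nonempty down set with $V\notin\mbox{\boldmath$\varGamma$}$, so the intersection in \eqref{E1.9} is taken over a nonempty family.

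For the easier inclusion $\hat{I}(\mbox{\boldmath$\varGamma$})\subseteq I^{*}(\mbox{\boldmath$\varGamma$})$: pick $S\in\hat{I}(\mbox{\boldmath$\varGamma$})$ and an arbitrary $B\in\mbox{\boldmath$\varGamma$}$. By Lemma~\ref{L2.1} there is a $\mbox{\boldmath$\varGamma$}$-maximal ideal $\textbf{\emph{I}}$ with $B\in\textbf{\emph{I}}$. By definition of $\hat{I}(\mbox{\boldmath$\varGamma$})$ we also have $S\in\textbf{\emph{I}}$, hence $S\cup B\in\textbf{\emph{I}}\subseteq\mbox{\boldmath$\varGamma$}$. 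This shows $S\in I^{*}(\mbox{\boldmath$\varGamma$})$.

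For the reverse inclusion $I^{*}(\mbox{\boldmath$\varGamma$})\subseteq\hat{I}(\mbox{\boldmath$\varGamma$})$: pick $S\in I^{*}(\mbox{\boldmath$\varGamma$})$ and any $\mbox{\boldmath$\varGamma$}$-maximal ideal $\textbf{\emph{I}}$; I must show $S\in\textbf{\emph{I}}$. The natural candidate extension is
\[
\mbox{\boldmath$\mathfrak{I}$}:=\{C\subseteq V : C\subseteq S\cup B \text{ for some } B\in\textbf{\emph{I}}\}.
\]
It is immediate that $\mbox{\boldmath$\mathfrak{I}$}$ is a down set, closed under finite unions (since $(S\cup B_1)\cup(S\cup B_2)=S\cup(B_1\cup B_2)$ and $\textbf{\emph{I}}$ is an ideal), and that $\textbf{\emph{I}}\cup\{S\}\subseteq\mbox{\boldmath$\mathfrak{I}$}$. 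Moreover, for every $C\in\mbox{\boldmath$\mathfrak{I}$}$ we have $C\subseteq S\cup B$ with $B\in\textbf{\emph{I}}\subseteq\mbox{\boldmath$\varGamma$}$, so by the defining property of $I^{*}(\mbox{\boldmath$\varGamma$})$ the set $S\cup B$ belongs to $\mbox{\boldmath$\varGamma$}$, and since $\mbox{\boldmath$\varGamma$}$ is a down set, $C\in\mbox{\boldmath$\varGamma$}$. Thus $\mbox{\boldmath$\mathfrak{I}$}\subseteq\mbox{\boldmath$\varGamma$}$.

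The only delicate step is verifying $V\notin\mbox{\boldmath$\mathfrak{I}$}$, which is needed for $\mbox{\boldmath$\mathfrak{I}$}$ to be an ideal on $V$. This is exactly where hypothesis \eqref{E2.2} enters: if $V\subseteq S\cup B$ for some $B\in\textbf{\emph{I}}$, then $V=S\cup B$, and since $S\cup B\in\mbox{\boldmath$\varGamma$}$ (as above), the down set property would force $V\in\mbox{\boldmath$\varGamma$}$, contradicting $V(\mbox{\boldmath$\varGamma$})\notin\mbox{\boldmath$\varGamma$}$. Hence $\mbox{\boldmath$\mathfrak{I}$}$ is indeed an ideal on $V$ satisfying $\textbf{\emph{I}}\subseteq\mbox{\boldmath$\mathfrak{I}$}\subseteq\mbox{\boldmath$\varGamma$}$, so by the $\mbox{\boldmath$\varGamma$}$-maximality of $\textbf{\emph{I}}$ we obtain $\mbox{\boldmath$\mathfrak{I}$}=\textbf{\emph{I}}$, and in particular $S\in\textbf{\emph{I}}$. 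Since $\textbf{\emph{I}}$ was an arbitrary $\mbox{\boldmath$\varGamma$}$-maximal ideal, $S\in\hat{I}(\mbox{\boldmath$\varGamma$})$. The main (and essentially only) obstacle is this last verification that $V\notin\mbox{\boldmath$\mathfrak{I}$}$; everything else is routine bookkeeping.
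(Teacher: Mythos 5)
Your proof is correct and follows essentially the same route as the paper: the extension $\mbox{\boldmath$\mathfrak{I}$}=\{C: C\subseteq S\cup B,\ B\in\textbf{\emph{I}}\}$ coincides with the paper's auxiliary ideal $\textbf{\emph{I}}(A)=\{B\cup K: B\subseteq A,\ K\in\textbf{\emph{I}}\}$, and the maximality argument plus the use of Lemma~\ref{L2.1} for the inclusion $\hat{I}(\mbox{\boldmath$\varGamma$})\subseteq I^{*}(\mbox{\boldmath$\varGamma$})$ are the same (you argue it directly, the paper by contradiction). No gaps.
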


\begin{proof}
Let us prove the inclusion
\begin{equation}\label{E2.4}
I^{*}(\mbox{\boldmath$\varGamma$})\subseteq\hat{I}(\mbox{\boldmath$\varGamma$}).
\end{equation}
Using \eqref{E1.9}, we can see that \eqref{E2.4} holds if and only if
\begin{equation}\label{E2.5}
A\in\textbf{\emph{I}} \quad \mbox{for every $\mbox{\boldmath$\varGamma$}$-maximal ideal $\textbf{\emph{I}}$ and every $A\in I^{*}(\mbox{\boldmath$\varGamma$}).$}
\end{equation}
Let $A$ be an arbitrary element of $I^{*}(\mbox{\boldmath$\varGamma$})$ and let $\textbf{\emph{I}}$ be a $\mbox{\boldmath$\varGamma$}$-maximal ideal. Define a set $\textbf{\emph{I}}(A)$ as
\begin{equation}\label{E2.6}
\textbf{\emph{I}}(A):=\{B\cup K: B\subseteq A \,\,\mbox{and}\,\, K\in\textbf{\emph{I}}\}.
\end{equation}
The trivial inclusion $\varnothing\subseteq A$ implies that $\textbf{\emph{I}}\subseteq\textbf{\emph{I}}(A).$
It follows from Definition~\ref{D1.4} that $\textbf{\emph{I}}\subseteq\mbox{\boldmath$\varGamma$}$. Since $I^{*}(\mbox{\boldmath$\varGamma$})$ is a down set (see Remark~\ref{rem4.1*}), the relations $$B\subseteq A\in I^{*}(\mbox{\boldmath$\varGamma$})\quad\mbox{and}\quad K\in\textbf{\emph{I}}\subseteq\mbox{\boldmath$\varGamma$}$$ yield
\begin{equation}\label{eqvot}
B\cup K\in\mbox{\boldmath$\varGamma$}.
\end{equation}
Hence \begin{equation}\label{eqvot*}\textbf{\emph{I}}(A)\subseteq\mbox{\boldmath$\varGamma$}.\end{equation} Moreover, \eqref{eqvot}, \eqref{E2.6} and \eqref{E2.2} imply that $V\not\in\textbf{\emph{I}}(A).$
Since $\textbf{\emph{I}}$ and $\mbox{\boldmath$\varGamma$}$ are down sets, the definition of $I^{*}(\mbox{\boldmath$\varGamma$})$ and \eqref{E2.6} imply that $\textbf{\emph{I}}(A)$ is a down set. If, for $i=1,2,$ $B_i \cup K_i\in\textbf{\emph{I}}(A)$ with $B_i \subseteq A$ and $K_i\in\textbf{\emph{I}},$  then, by the definition of ideals, $K_1 \cup K_2\in\textbf{\emph{I}}$ and, moreover, $B_1 \cup B_2 \subseteq A.$ Consequently, from the equality
\begin{equation*}
(B_1 \cup K_1)\cup(B_2 \cup K_2)=(B_1 \cup B_2)\cup(K_1 \cup K_2)
\end{equation*} we obtain
\begin{equation*}
(B_1 \cup K_1)\cup(B_2 \cup K_2)\in\textbf{\emph{I}}(A).
\end{equation*}
Hence $\textbf{\emph{I}}(A)$ is an ideal on $V.$ Since $\textbf{\emph{I}}\subseteq\textbf{\emph{I}}(A)$ and $\textbf{\emph{I}}$ is $\mbox{\boldmath$\varGamma$}$-maximal, from $\eqref{eqvot*}$ and $\eqref{E1.8}$ we obtain the equality
\begin{equation}\label{E2.7}
\textbf{\emph{I}}(A)=\textbf{\emph{I}}.
\end{equation}
The membership $A\in\textbf{\emph{I}}(A)$ and \eqref{E2.7} yield \eqref{E2.5}.

Consider now the inclusion
\begin{equation}\label{E2.8}
\hat{I}(\mbox{\boldmath$\varGamma$})\subseteq I^{*}(\mbox{\boldmath$\varGamma$}).
\end{equation}
If \eqref{E2.8} does not hold, then we can find $A\in\hat{I}(\mbox{\boldmath$\varGamma$})$ and $B\in\mbox{\boldmath$\varGamma$}$ so that
\begin{equation}\label{E2.9}
A\cup B\not\in\mbox{\boldmath$\varGamma$}.
\end{equation} By Lemma~\ref{L2.1}, there is a $\mbox{\boldmath$\varGamma$}$-maximal ideal $\textbf{\emph{I}}$ such that $B\in\textbf{\emph{I}}.$ The membership $A\in\hat{I}(\mbox{\boldmath$\varGamma$})$ yields that $A\in \textbf{\emph{I}}.$
Since $\textbf{\emph{I}}$ is an ideal, from $A\in\textbf{\emph{I}}$ and $B\in\textbf{\emph{I}}$ it follows that $A\cup B\in\textbf{\emph{I}}\subseteq\mbox{\boldmath$\varGamma$},$ contrary to \eqref{E2.9}.
\end{proof}


\begin{corollary}\label{C2.1}
Let $\mbox{\boldmath$\varGamma$}$ be a nonempty down set. Then the collection $I^{*}(\mbox{\boldmath$\varGamma$})$ is an ideal on $V$ if and only if $V\not\in\mbox{\boldmath$\varGamma$}.$
\end{corollary}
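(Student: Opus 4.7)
The plan is to deduce both directions from results already established in the excerpt, without reworking the axiom check by hand. The ``only if'' direction is immediate: suppose $V\in\mbox{\boldmath$\varGamma$}$. Then Lemma~\ref{L(new)} gives $V\in I^{*}(\mbox{\boldmath$\varGamma$})$, but an ideal on $V$ must not contain $V$ by condition (iii) of the definition of an ideal, so $I^{*}(\mbox{\boldmath$\varGamma$})$ fails to be an ideal on $V$.

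For the ``if'' direction, assume $V\not\in\mbox{\boldmath$\varGamma$}$. Since $\mbox{\boldmath$\varGamma$}$ is a nonempty down set, Lemma~\ref{L2.1} applies and yields $M(\mbox{\boldmath$\varGamma$})\ne\varnothing$: for any $A\in\mbox{\boldmath$\varGamma$}$ there is a $\mbox{\boldmath$\varGamma$}$-maximal ideal containing $A$. Hence $\hat I(\mbox{\boldmath$\varGamma$})$ defined by \eqref{E1.9} is a genuine intersection of ideals on $V$. Then Theorem~\ref{L2.2} (whose hypotheses are precisely those we have now) gives the equality $I^{*}(\mbox{\boldmath$\varGamma$})=\hat I(\mbox{\boldmath$\varGamma$})$, and the task reduces to observing that an arbitrary nonempty intersection of ideals on $V$ is again an ideal on $V$.

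This last observation is routine: intersections of down sets are down sets, intersections preserve closure under pairwise unions, and since no $\textbf{\emph{I}}\in M(\mbox{\boldmath$\varGamma$})$ contains $V$ (by condition (iii) applied to each), neither does $\hat I(\mbox{\boldmath$\varGamma$})$. Thus $I^{*}(\mbox{\boldmath$\varGamma$})=\hat I(\mbox{\boldmath$\varGamma$})$ satisfies (i)--(iii) and is an ideal on $V$.

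There is no real obstacle in this proof; the only point requiring any vigilance is the non-vacuity of the intersection defining $\hat I(\mbox{\boldmath$\varGamma$})$, which is exactly what Lemma~\ref{L2.1} provides, so that Theorem~\ref{L2.2} can be invoked cleanly. Everything else is a direct transcription of already proved facts.
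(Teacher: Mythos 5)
Your proof is correct and follows essentially the same route as the paper: the ``if'' direction uses the nonemptiness of $M(\mbox{\boldmath$\varGamma$})$ (via Lemma~\ref{L2.1}), the fact that a nonempty intersection of ideals on $V$ is an ideal on $V$, and Theorem~\ref{L2.2}; the ``only if'' direction is the contrapositive of the paper's argument combining condition (iii) with Lemma~\ref{L(new)}. Your explicit appeal to Lemma~\ref{L2.1} just makes precise the paper's remark that $M(\mbox{\boldmath$\varGamma$})\ne\varnothing$ because $\mbox{\boldmath$\varGamma$}\ne\varnothing$.
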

\begin{proof}
The intersection of an arbitrary nonempty set of ideals is an ideal. The set of $\mbox{\boldmath$\varGamma$}$-maximal ideals is nonempty, because $\mbox{\boldmath$\varGamma$}\ne\varnothing.$ Consequently, $\hat I(\mbox{\boldmath$\varGamma$})$ is an ideal on $V=V(\mbox{\boldmath$\varGamma$}).$ Hence, by Theorem~\ref{L2.2}, $I^{*}(\mbox{\boldmath$\varGamma$})$ is an ideal on $V.$

Conversely, if $I^{*}(\mbox{\boldmath$\varGamma$})$ is an ideal on $V,$ then condition (iii) from the definition of ideals implies that $V\not\in I^{*}(\mbox{\boldmath$\varGamma$}).$ Using Lemma~\ref{L(new)}, we obtain that $V\not\in\mbox{\boldmath$\varGamma$}.$
\end{proof}

\begin{remark}
If $\mbox{\boldmath$\varGamma$}$ is a down set and $V(\mbox{\boldmath$\varGamma$})\in\mbox{\boldmath$\varGamma$},$ then, as is easily seen, the equality $\hat I(\mbox{\boldmath$\varGamma$})=\{\varnothing\}$ holds, so that, in this case, the question about the structure of $\hat I(\mbox{\boldmath$\varGamma$})$ is trivial.
\end{remark}



\section{Blow up of sets}\hspace*{\parindent}
Recall that for $q>1$ and $E\subseteq\mathbb R^{+}$ we define the $q$-blow up of $E$ as
\begin{equation}\label{eq4.12}
E(q):=\bigcup_{x\in E}(q^{-1}x, qx).
\end{equation}

\begin{remark}\label{rem4.5}
For all $E\subseteq\mathbb R^{+}$ and $q>1,$ we have
\begin{equation}\label{eq4.13}
(0\not \in E)\Leftrightarrow (E(q)\supseteq E).
\end{equation}
Indeed, the implication $(0\not \in E)\Rightarrow (E(q)\supseteq E)$ is evident. Conversely, suppose that $0\in E.$ Since $0\not \in (q^{-1}x, qx)$ for every nonzero $x$ and $(q^{-1}0, q0)=(0,0)=\varnothing,$ we obtain $0\not \in E(q).$ Thus \eqref{eq4.13} follows.
\end{remark}

\begin{lemma}\label{L13}
Let $0<a<b<\infty.$ The following statements hold.
\begin{enumerate}
\item[\rm(i)]\textit{If $q\ge\frac{b}{a}$ and $\varnothing\ne E\subseteq (a,b),$ then the set $E(q)$ is an open interval such that $E(q)\supseteq (a,b).$}

\item[\rm(ii)]\textit{If $E=(a,b),$ then $E(q)=(q^{-1}a, qb)$ for every $q>1.$}

\end{enumerate}
\end{lemma}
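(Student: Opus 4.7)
The plan is to dispatch part (i) by exploiting the hypothesis $q \geq b/a$ directly, and then to prove part (ii) by a two-sided inclusion argument.

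For part (i), the key observation is that $q \geq b/a$ forces each constituent interval $(q^{-1}x, qx)$ with $x \in E$ to overshoot the endpoints of $(a,b)$. Indeed, from $x < b$ one obtains $q^{-1}x < q^{-1}b \leq a$, and from $x > a$ one gets $qx > qa \geq b$. Hence $(q^{-1}x, qx) \supseteq (a,b)$ for every $x \in E$, which immediately gives $E(q) \supseteq (a,b)$ after taking the union over $x \in E$. To conclude that $E(q)$ is itself an open interval, I would invoke the standard fact that a union of open intervals sharing a common point is connected, hence an open interval in $\mathbb R$; here the common point can be taken to be any element of the nonempty set $(a,b)$.

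For part (ii), the inclusion $E(q) \subseteq (q^{-1}a, qb)$ is immediate from the monotonicity of $x \mapsto q^{-1}x$ and $x \mapsto qx$ on $(a,b)$. For the reverse inclusion, given $y \in (q^{-1}a, qb)$, I would look for $x \in (a,b)$ satisfying $q^{-1}x < y < qx$, i.e., $x \in (y/q, qy)$. The inequalities $y/q < b$ (from $y < qb$), $a < qy$ (from $y > q^{-1}a$), and $y/q < qy$ (from $q > 1$ and $y > 0$) together imply that $(y/q, qy) \cap (a,b)$ is a nonempty open interval, and any element $x$ of this intersection places $y$ into $(q^{-1}x, qx) \subseteq E(q)$.

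I do not foresee any real obstacle: both parts reduce to elementary inequalities. The only point worth flagging is that the condition $q \geq b/a$ in part (i) is \emph{sharp} for the argument used, in the sense that it is exactly what is needed to guarantee $q^{-1}b \leq a$ and $qa \geq b$ simultaneously, which is what triggers the uniform-overlap phenomenon that yields both the inclusion $E(q) \supseteq (a,b)$ and the connectedness of $E(q)$.
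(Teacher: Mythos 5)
Your proof is correct: the inequalities $q^{-1}x<q^{-1}b\le a$ and $qx>qa\ge b$ in part (i) give that every constituent interval contains $(a,b)$, so the union is a connected open set, hence an interval containing $(a,b)$; and the two-sided inclusion argument in part (ii), reducing the reverse inclusion to the nonemptiness of $(y/q,qy)\cap(a,b)$, is sound. The paper omits the proof of this lemma as ``simple,'' so your write-up supplies exactly the elementary argument that was intended, and there is nothing in the paper to compare it against.
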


The proof is simple and omitted here.

\begin{lemma}\label{L(new1)}
Let $A$ and $B$ be subsets of $\mathbb R^{+},$ let $t>0$ and let
\begin{equation}\label{eqv0}
(0, t)\cap B\subseteq(0, t)\cap A
\end{equation}
hold. Then the inclusion
\begin{equation}\label{eqv0*}
(0, tq^{-1})\cap B(q)\subseteq (0, tq^{-1})\cap A(q)
\end{equation}
holds for every $q>1.$
\end{lemma}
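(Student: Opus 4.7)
The plan is to unpack definitions and chase a point through the inclusion. Take an arbitrary $y \in (0, tq^{-1}) \cap B(q)$; the goal is to show $y \in (0, tq^{-1}) \cap A(q)$.

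First I would use the definition of $B(q) = \bigcup_{x \in B}(q^{-1}x, qx)$ to produce a witness $x \in B$ with $q^{-1}x < y < qx$. The key estimate is to show that this $x$ must actually lie in $(0,t)$. The inequality $q^{-1} x < y < t q^{-1}$ immediately gives $x < t$, while $x > y/q > 0$ (since $y > 0$) gives $x > 0$. Hence $x \in (0,t) \cap B$. This is precisely where the bound $tq^{-1}$ on the left-hand side of \eqref{eqv0*} is needed, rather than a looser bound like $t$: after multiplying $y < tq^{-1}$ by $q$ we get exactly $qy < t$, and combined with $x < qy$ we confine $x$ inside $(0,t)$.

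Next I would apply the hypothesis \eqref{eqv0} to conclude $x \in (0,t) \cap A$, and in particular $x \in A$. Since $y$ still satisfies $q^{-1}x < y < qx$, the point $y$ lies in $(q^{-1}x, qx) \subseteq A(q)$ by definition \eqref{eq4.12}. Together with $y \in (0, tq^{-1})$, this yields $y \in (0, tq^{-1}) \cap A(q)$, which is \eqref{eqv0*}.

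There is no substantive obstacle here — the argument is a direct unpacking of the blow-up definition, with the only subtlety being that one must choose the correct threshold ($tq^{-1}$ instead of $t$) so that the witness $x$ of membership in $B(q)$ is forced back into the interval $(0,t)$ on which the hypothesis applies.
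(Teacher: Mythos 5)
Your proof is correct and follows essentially the same route as the paper's: unpack the definition of the blow-up to get a witness point of $B$, use the threshold $tq^{-1}$ to force that witness into $(0,t)$, apply the hypothesis \eqref{eqv0}, and conclude membership in $A(q)$. The only difference is cosmetic: the paper first derives the intermediate inclusion into $(0,tq)\cap A(q)$ and then intersects with $(0,tq^{-1})$, whereas you note directly that the point already lies in $(0,tq^{-1})$, which slightly streamlines the ending.
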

\begin{proof}
Let $q>1$ and let $x\in (0, tq^{-1})\cap B(q).$ Then we have
\begin{equation}\label{eqv1}
0<x<tq^{-1}
\end{equation}
and there is $y\in B$ such that
\begin{equation}\label{eqv2}
q^{-1}y<x<qy.
\end{equation}
It follows from \eqref{eqv1} and \eqref{eqv2}, that $q^{-1}y<x<tq^{-1}.$ Consequently, $y< t$ holds. The last inequality, $y\in B$ and \eqref{eqv0} imply $$y\in (0, t)\cap B\subseteq (0, t)\cap A,$$ so that $y\in (0, t)$ and $y\in A.$ These relations yield $$(q^{-1}y, qy)\subseteq (0, tq)\quad \mbox{and}\quad (q^{-1}y, qy)\subseteq A(q)$$ because $(E\subseteq F)\Rightarrow (E(q)\subseteq F(q))$ holds for all $E\subseteq\mathbb R^{+}, \, F\subseteq\mathbb R^{+}$ and $q>1.$ Consequently we have $$x\in(q^{-1}y, qy)\subseteq (0, tq)\cap A(q)$$ for every $x\in (0, tq^{-1})\cap B(q),$ i. e.
\begin{equation}\label{eqv3}
(0, tq^{-1})\cap B(q)\subseteq (0, tq)\cap A(q).
\end{equation}
The inclusion $(0, tq^{-1})\subseteq (0, tq)$ and \eqref{eqv3} imply that $$(0, tq^{-1})\cap B(q)\subseteq (0, tq^{-1})\cap (0, tq)\cap A(q)\subseteq (0, tq^{-1})\cap A(q).$$
Hence, inclusion \eqref{eqv0*} follows immediately.
\end{proof}
\begin{remark}
The constant $q^{-1}$ is exact in the sense that for every $c>q^{-1}$ we can find $A$ and $B$ so that \eqref{eqv0} holds and $$(0, ct)\cap B(q)\nsubseteq (0, ct)\cap A(q).$$ To see this, put $B=[t, \infty)$ and $A=\{0\}.$ Then we obtain $(0, t)\cap B(q)=(0, t)\cap A=(0, ct)\cap A(q)=\varnothing$ and $(0, ct)\cap B(q)=(0, ct)\cap [q^{-1}t, \infty)=[q^{-1}t, ct)\ne\varnothing$ for all $q>1$ and $c\in (q^{-1}, \infty).$
\end{remark}

\begin{lemma}\label{L13*}
Let $E\subseteq\mathbb R^{+}$ and $E\not\in \textbf{SP}.$ Then there are $q>1$ and $t>0$ such that the equality
\begin{equation}\label{w1}
E(q)\cap (0, t)=(0, t)
\end{equation}
holds.
\end{lemma}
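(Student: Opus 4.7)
The plan is to translate the hypothesis $E \notin \textbf{\emph{SP}}$ (i.e.\ $p^{+}(E,0)<1$) into a uniform gap bound near $0$, and then show that if a point $y\in(0,t)$ failed to lie in $E(q)$ for suitable $q,t$, the resulting empty neighbourhood around $y$ would be so large relative to $qy$ that it would violate that gap bound.

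First I would pass from the inequality $p^{+}(E,0)<1$ to a concrete estimate: choose $\alpha\in\bigl(p^{+}(E,0),\,1\bigr)$. By the definition of $\limsup$ in \eqref{E1.1}, there exists $t_0>0$ such that
\[
\frac{\lambda(E,0,h)}{h}\le\alpha\qquad\text{for every }h\in(0,t_0].
\]
Next I would fix $q>1$ so large that $1-q^{-2}>\alpha$ (any $q>(1-\alpha)^{-1/2}$ works), and set $t:=t_0/q$.

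The core step is the containment $(0,t)\subseteq E(q)$. I would use the duality
\[
y\in(q^{-1}x,qx)\Longleftrightarrow x\in(q^{-1}y,qy),
\]
so that $y\in E(q)$ is equivalent to $E\cap(q^{-1}y,qy)\ne\varnothing$. Arguing by contradiction, suppose some $y\in(0,t)$ satisfies $E\cap(q^{-1}y,qy)=\varnothing$. Since $y<t_0/q$, we have $qy<t_0$, and $(q^{-1}y,qy)\subset(0,qy)$ is an open subinterval of length $(q-q^{-1})y$ disjoint from $E$. Hence
\[
\frac{\lambda(E,0,qy)}{qy}\ge\frac{q-q^{-1}}{q}=1-q^{-2}>\alpha,
\]
contradicting the uniform gap bound applied at $h=qy\in(0,t_0]$. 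Thus $(0,t)\subseteq E(q)$, and since the reverse inclusion $E(q)\cap(0,t)\subseteq(0,t)$ is tautological, we conclude $E(q)\cap(0,t)=(0,t)$.

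The only mild subtlety is making sure the interval $(q^{-1}y,qy)$ actually sits inside $(0,qy)$ so that its length contributes to $\lambda(E,0,qy)$; this is automatic because $y>0$ forces $q^{-1}y>0$. No issue arises from whether $0\in E$, since the conclusion involves only the open interval $(0,t)$, and by Remark~\ref{rem4.5} the point $0$ is in any event excluded from $E(q)$. I do not anticipate any serious obstacle—the argument is essentially a one-line consequence of a suitable quantitative form of $p^{+}(E,0)<1$, and the main care is in the correct bookkeeping of the constants $\alpha$, $q$, $t_0$, and $t=t_0/q$.
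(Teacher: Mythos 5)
Your proof is correct and follows essentially the same route as the paper: quantify $p^{+}(E,0)<1$ by a uniform bound $\lambda(E,0,h)/h\le\alpha$ for all small $h$, then show that any $y\in(0,t)$ missed by $E(q)$ would force an $E$-free gap of relative length exceeding $\alpha$, a contradiction. The differences are only in bookkeeping: the paper takes $q=\frac{1}{1-s}$ and finds a point of $E$ just below each $y\in(0,t)\setminus E$ (treating $E\cap(0,t)$ separately), whereas you use the duality $y\in E(q)\Leftrightarrow E\cap(q^{-1}y,qy)\ne\varnothing$ and apply the bound at $h=qy$ with $q>(1-\alpha)^{-1/2}$; both are sound.
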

\begin{proof}
Since $E$ is not strongly porous on the right at $0,$ there is $s\in (0, 1)$ such that
\begin{equation*}
\limsup_{h\to 0+}\frac{\lambda (E, 0, h)}{h}<s,
\end{equation*}
where $\lambda (E, 0, h)$ is the length of the largest open subinterval of $(0, h)$ that contains no point of $E$ (see Definition~\ref{D1.1}). Consequently, there exists $t>0$ such that, for every $y\in (0, t)\setminus E,$ there exists $x\in E$ satisfying the inequalities
\begin{equation*}
x<y \quad\mbox{and}\quad \frac{y-x}{y}<s.
\end{equation*}
These inequalities imply that
\begin{equation*}
x<y < \frac{x}{1-s}.
\end{equation*}
Hence, $y\in (q^{-1}x, qx)$ holds with $q=\frac{1}{1-s}.$ Thus, the inclusion
\begin{equation*}
(0, t)\setminus E\subseteq E(q)
\end{equation*}
holds for such $q.$ Since $E\cap (0, t)\subseteq E(q)$ holds for all $t>0$ and $q>1,$ we obtain
\begin{equation*}
(0, t)=(E\cap (0, t))\cup((0, t)\setminus E)\subseteq E(q)\cup E(q)= E(q).
\end{equation*}
Thus,
\begin{equation*}
(0, t)\subseteq (0,t)\cap E(q)\subseteq (0, t),
\end{equation*}
which implies \eqref{w1}.
\end{proof}

\section{Blow up of strongly porous at $0$ sets}

\hspace*{\parindent}Let us prove that the $q$-blow up preserves $\emph{\textbf{SP}}.$


\begin{lemma}\label{L14}
Let $E\subseteq\mathbb R^{+}$ and $q>1.$ Then $E$ belongs to $\textbf{SP}$ if and only if $E(q)$ belongs to $\textbf{SP}.$
\end{lemma}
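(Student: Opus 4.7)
The two implications have very different flavors, so I would treat them separately. The easy direction $E(q)\in\textbf{SP}\Rightarrow E\in\textbf{SP}$ follows from the inclusion $E\setminus\{0\}\subseteq E(q)$: since $q>1$, every nonzero $x\in E$ lies in $(q^{-1}x, qx)\subseteq E(q)$. Consequently, any open subinterval of $(0,h)$ disjoint from $E(q)$ is automatically disjoint from $E$, which yields $\lambda(E,0,h)\ge\lambda(E(q),0,h)$ for every $h>0$ and hence $p^{+}(E,0)\ge p^{+}(E(q),0)=1$.

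\medskip

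For the forward direction I would assume $E\in\textbf{SP}$ and choose $h_n\downarrow 0$ together with open intervals $(a_n, b_n)\subseteq (0, h_n)\setminus E$ such that $(b_n-a_n)/h_n\to 1$. The inequalities $0<a_n<b_n\le h_n$ then force $a_n/h_n\to 0$ and $b_n/h_n\to 1$, and in particular $a_n/b_n\to 0$. The central claim is that for all large $n$ the shrunken interval $(qa_n, q^{-1}b_n)$ is non-empty and disjoint from $E(q)$. Indeed, if some $y\in(qa_n, q^{-1}b_n)$ belonged to $E(q)$, there would exist $x\in E$ with $q^{-1}x<y<qx$; then $qa_n<y<qx$ gives $a_n<x$, while $q^{-1}x<y<q^{-1}b_n$ gives $x<b_n$, placing $x$ in $(a_n, b_n)\cap E=\varnothing$, a contradiction. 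Setting $h'_n:=q^{-1}b_n\to 0$, this gives
\begin{equation*}
\frac{\lambda(E(q),0,h'_n)}{h'_n}\ge\frac{q^{-1}b_n-qa_n}{q^{-1}b_n}=1-q^{2}\frac{a_n}{b_n}\longrightarrow 1,
\end{equation*}
so $p^{+}(E(q),0)=1$ and hence $E(q)\in\textbf{SP}$.

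\medskip

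The main obstacle is the gap-shrinking step. Although $(a_n, b_n)$ contains no point of $E$, points of $E$ lying just below $a_n$ or just above $b_n$ can be blown up into $(a_n, b_n)$, so one must retreat from $(a_n, b_n)$ to $(qa_n, q^{-1}b_n)$ to obtain a genuine gap of $E(q)$. This retreat costs a factor of $q$ at the right endpoint, which is exactly why the test level must be rescaled from $h_n$ down to $q^{-1}b_n$; the fact that $a_n/b_n\to 0$ is precisely what keeps the ratio of gap length to test level from degenerating in the limit.
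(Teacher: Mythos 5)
Your proof is correct and follows essentially the same route as the paper: the easy direction via the inclusion $E\setminus\{0\}\subseteq E(q)$ (monotonicity of the gaps), and the forward direction by shrinking each gap $(a_n,b_n)$ of $E$ to $(qa_n,q^{-1}b_n)$, which is disjoint from $E(q)$, and noting the length ratio still tends to $1$ since $a_n/b_n\to 0$. The only difference is that you spell out the disjointness verification and the passage from the $\limsup$ definition to the gap sequence, steps the paper leaves as ``easy to prove.''
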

\begin{proof}
Since $E(q)=(E\setminus\{0\})(q)$ and $$(E\in \textbf{\emph{SP}})\Leftrightarrow (E\setminus\{0\}\in \textbf{\emph{SP}}),$$ we may assume that
$
0\not\in E.
$ In accordance with \eqref{eq4.13}, this
assumption implies the inclusion
\begin{equation}\label{eq4.15}
E\subseteq E(q).
\end{equation}
Since $\textbf{\emph{SP}}$ is a down set, the implication $(E(q)\in \textbf{\emph{SP}})\Rightarrow (E\in \textbf{\emph{SP}})$ follows.

Let $E\in \textbf{\emph{SP}}.$ Then there is a sequence $\{(a_n, b_n)\}_{n\in\mathbb N}$ such that $0<a_n <b_n,$ $b_n\downarrow 0,$ $(a_n, b_n)\cap E=\varnothing$ and $\mathop{\lim}\limits_{n\to\infty}\frac{a_n}{b_n}=0.$ It is easy to prove that $qa_n <q^{-1}b_n$ and $(qa_n, q^{-1}b_n)\cap E(q)=\varnothing$ for all sufficiently large $n.$ Since
\begin{equation*}
\lim_{n\to\infty}\frac{qa_n}{q^{-1}b_n}=\lim_{n\to\infty}q^{2}\frac{a_n}{b_n}=0,
\end{equation*}
the set $E(q)$ is strongly porous on the right at $0.$ The implication $(E\in \textbf{\emph{SP}})\Rightarrow (E(q)\in \textbf{\emph{SP}})$ follows. Thus, $$(E\in \textbf{\emph{SP}})\Leftrightarrow (E(q)\in \textbf{\emph{SP}})$$ holds.
\end{proof}

\begin{corollary}\label{col}
Let $E\subseteq\mathbb R^{+}$ and $q>1.$ Then $E\in I^{*}(\textbf{SP})$ holds if and only if $E(q)\in I^{*}(\textbf{SP}).$
\end{corollary}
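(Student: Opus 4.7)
The plan is to combine Lemma~\ref{L14} (which says the blow-up both preserves and reflects membership in $\textbf{\emph{SP}}$) with the elementary set identity
\[
(E\cup B)(q)=E(q)\cup B(q),
\]
which follows immediately from Definition~\ref{D1.5} since the blow-up is defined as a union over points. The fact that $\textbf{\emph{SP}}$ is a down set (Remark~\ref{Rem1}) together with Remark~\ref{rem4.5} will handle the minor bookkeeping around the point $0$.

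For the implication $E\in I^{*}(\textbf{\emph{SP}})\Rightarrow E(q)\in I^{*}(\textbf{\emph{SP}})$: I would take an arbitrary $B\in\textbf{\emph{SP}}$ and aim to show $E(q)\cup B\in\textbf{\emph{SP}}$. After discarding $0$ from $B$ (which does not affect membership in $\textbf{\emph{SP}}$), Remark~\ref{rem4.5} gives $B\subseteq B(q)$. By the definition of $I^{*}$, $E\cup B\in\textbf{\emph{SP}}$, hence Lemma~\ref{L14} yields $(E\cup B)(q)\in\textbf{\emph{SP}}$. Using distributivity,
\[
E(q)\cup B\;\subseteq\;E(q)\cup B(q)\;=\;(E\cup B)(q)\in\textbf{\emph{SP}},
\]
and the down-set property of $\textbf{\emph{SP}}$ finishes this direction.

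For the converse $E(q)\in I^{*}(\textbf{\emph{SP}})\Rightarrow E\in I^{*}(\textbf{\emph{SP}})$: given $B\in\textbf{\emph{SP}}$, Lemma~\ref{L14} gives $B(q)\in\textbf{\emph{SP}}$, so the hypothesis on $E(q)$ yields $E(q)\cup B(q)\in\textbf{\emph{SP}}$. Rewriting this as $(E\cup B)(q)$ and invoking Lemma~\ref{L14} in the reverse direction produces $E\cup B\in\textbf{\emph{SP}}$, as required.

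There is no serious obstacle here: the whole argument is a short two-line symmetric computation built out of Lemma~\ref{L14} and the trivial distributivity of the blow-up over unions. The only thing to watch out for is the harmless role of $0$, which I would dispose of at the outset by replacing $B$ with $B\setminus\{0\}$ so that the inclusion $B\subseteq B(q)$ supplied by Remark~\ref{rem4.5} is available.
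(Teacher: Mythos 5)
Your proof is correct and follows essentially the same route as the paper: for the nontrivial direction you do exactly what the paper does — reduce to $0\not\in B$, use $E\cup B\in\textbf{SP}$, apply Lemma~\ref{L14} together with the identity $(E\cup B)(q)=E(q)\cup B(q)$, and finish via $B\subseteq B(q)$ and the down-set property of $\textbf{SP}$. The only (minor) divergence is the easy direction, which the paper deduces from $E\subseteq E(q)$ and the fact that $I^{*}(\textbf{SP})$ is a down set, whereas you run the same symmetric computation through Lemma~\ref{L14} in both directions; both arguments are valid, and yours has the small advantage of not needing to treat the point $0$ in that direction.
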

\begin{proof}
As in the proof of Lemma~\ref{L14}, we may suppose that $E(q)\supseteq E.$ This yields $(E(q)\in I^{*}(\textbf{\emph{SP}}))\Rightarrow (E\in I^{*}(\textbf{\emph{SP}})).$ Let $E\in I^{*}(\textbf{\emph{SP}}).$ The relation $E(q)\in I^{*}(\textbf{\emph{SP}})$ holds if and only if
\begin{equation}\label{eq4.18}
E(q)\cup B\in \textbf{\emph{SP}} \quad\mbox{for every $B\in\textbf{\emph{SP}}.$}
\end{equation}
Using the relation
\begin{equation*}
(B\in \textbf{\emph{SP}})\Leftrightarrow (B\setminus\{0\}\in \textbf{\emph{SP}})
\end{equation*}
we may consider only the case where $0\not\in B.$ The membership $E\in I^{*}(\textbf{\emph{SP}})$ implies $E\cup B\in\textbf{\emph{SP}}.$
Consequently, by statement (ii) of Lemma~\ref{L14}, we obtain
\begin{equation}\label{eq4.19}
E(q)\cup B(q)\in \textbf{\emph{SP}}.
\end{equation}
Since $0\not\in B,$ the inclusion $B\subseteq B(q)$ holds. The last inclusion and \eqref{eq4.19} yield \eqref{eq4.18}.
\end{proof}

Let $A$ and $B$ be nonempty subsets of $\mathbb R^{+}.$ We define $A\prec B$ if $b< a$ holds for every $b\in B$ and $a\in A.$ Furthermore, we set
$$A\preceq B \quad\mbox{if}\quad A = B \quad \mbox{or} \quad A\prec B.$$ The relation $\preceq$ is a partial order on the set of nonempty subsets of $\mathbb R^{+}.$
A chain (i.e., a linearly ordered set) $(P, \le_{P})$ is said to be well-ordered if every nonempty subset $X$ of $P$ contains a smallest element, i.e., an element $x\in X$ such that $x\le_{P} y$ for every $y\in X.$

It is easy to prove, that for every nonempty $A\subseteq\mathbb R^{+},$ the set $\mathrm{Cc}A$ of connected components of $A$ is a chain w. r. t. the partial order $\preceq$. Define a set $\mathrm{Cc}^{1} A$ by the rule: $$B \in\mathrm{Cc}^{1}A \quad\mbox{if}\quad B\in \mathrm{Cc}A\quad\mbox{and}\quad B\subset (0, 1].$$

\begin{lemma}\label{L15}
Let $\varnothing \ne E \subseteq \mathbb R^{+}$ and let $q>1.$
Then the chain $(\mathrm{Cc}^{1} E(q), \preceq)$ is well-ordered.
\end{lemma}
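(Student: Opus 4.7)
The plan is to show that every nonempty $\mathcal{S}\subseteq\mathrm{Cc}^{1}E(q)$ has a $\preceq$-smallest element. Recall that $A\prec B$ means every element of $B$ lies strictly to the left of every element of $A$, so in this order the $\preceq$-smallest component of $\mathcal{S}$ is the rightmost one on the real line. The fact that $(\mathrm{Cc}^{1}E(q),\preceq)$ is a chain has been recorded already just before the lemma, so only the existence of a rightmost element in every nonempty subset needs to be proved.

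The key geometric ingredient is the following observation about any connected component $B=(\alpha,\beta)$ of the open set $E(q)$: one has
\begin{equation*}
\beta\geq q^{2}\alpha.
\end{equation*}
Indeed, each interval $(q^{-1}x,qx)$ appearing in \eqref{eq4.12} is connected, so if it meets $B$ it lies entirely in $B$, and hence $B$ is the union of those $(q^{-1}x,qx)$ contained in it. For any such $x\in E$ one has simultaneously $\alpha\leq q^{-1}x$ and $qx\leq\beta$, which gives the inequality (and is vacuous in the boundary case $\alpha=0$).

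Now fix an arbitrary $B_{0}=(\alpha_{0},\beta_{0})\in\mathcal{S}$; then $0<\beta_{0}\leq 1$. Consider the ``right tail'' $\mathcal{S}':=\{B\in\mathcal{S}:B\preceq B_{0}\}$ and list its elements in decreasing $\preceq$-order, which corresponds to the natural left-to-right order on $\mathbb{R}$: $B_{0}=B^{(0)},B^{(1)},B^{(2)},\ldots$. Being pairwise disjoint open intervals placed to the right of $B_{0}$, they satisfy $\beta^{(i-1)}\leq\alpha^{(i)}$ for $i\geq 1$, and the geometric inequality gives $\beta^{(i)}\geq q^{2}\alpha^{(i)}\geq q^{2}\beta^{(i-1)}$. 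By induction $\beta^{(i)}\geq q^{2i}\beta_{0}$, and the bound $\beta^{(i)}\leq 1$ forces $i\leq \log(1/\beta_{0})/(2\log q)$. Thus $\mathcal{S}'$ is finite.

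A nonempty finite chain has a smallest element; let $B^{*}$ be the smallest element of $\mathcal{S}'$. For any $B\in\mathcal{S}\setminus\mathcal{S}'$, totality of $\mathcal{S}$ as a chain together with $B\not\preceq B_{0}$ forces $B_{0}\prec B$, and then transitivity yields $B^{*}\preceq B_{0}\prec B$. Hence $B^{*}$ is a $\preceq$-smallest element of $\mathcal{S}$, completing the proof. The main point is the geometric $q^{2}$-growth of the right endpoints of consecutive components to the right of a fixed one; there is no serious obstacle beyond keeping the direction of $\preceq$ straight (the smallest element in the order is the rightmost one on the line, not the leftmost).
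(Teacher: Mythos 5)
Your argument is correct and is essentially the paper's: your inequality $\beta\ge q^{2}\alpha$ for each connected component of $E(q)$ is exactly the paper's observation that $(q^{-1}x,qx)\subseteq(a_k,b_k)$ for $x\in E\cap(a_k,b_k)$, and chaining it across the pairwise disjoint components lying in $(0,1]$ to the right of a fixed $B_{0}$ is the paper's telescoping-logarithm estimate, both reducing well-ordering to finiteness of such right tails (a fact the paper itself records right after the lemma). The only, easily repaired, presentational point is that you ``list'' the elements of $\mathcal{S}'$ in decreasing $\preceq$-order before knowing such an enumeration exists; to be precise, apply the bound $1\ge\beta^{(i)}\ge q^{2i}\beta_{0}$ to an arbitrary finite left-to-right ordered subset of $\mathcal{S}'$, which bounds its cardinality and yields the finiteness directly, whereas the paper instead derives a contradiction from an infinite $\preceq$-decreasing sequence.
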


\begin{proof}
If there is $X\subseteq \mathrm{Cc}^{1}E(q),$ which does not have a smallest element, then there is a sequence $\{(a_i, b_i)\}_{i\in\mathbb N}$ such that $$(a_1, b_1)\succ (a_2, b_2)\succ ... (a_i, b_i)\succ(a_{i+1}, b_{i+1})\succ ...$$ with $(a_i, b_i)\in X$ for every $i\in\mathbb N.$ The equalities
\begin{equation*}
\ln a_{1}^{-1}=(\ln a_{1}^{-1}-\ln b_{1}^{-1})+\ln b_{1}^{-1}
\end{equation*}
\begin{equation*}
=(\ln a_{1}^{-1}-\ln b_{1}^{-1})+(\ln b_{1}^{-1}-\ln a_{2}^{-1})+(\ln a_{2}^{-1}-\ln b_{2}^{-1})+\ln b_{2}^{-1}
\end{equation*}
\begin{equation*}
= ... =\sum_{k=1}^{i+1}(\ln a_{k}^{-1}-\ln b_{k}^{-1})+\sum_{k=1}^{i}(\ln b_{k}^{-1}-\ln a_{k+1}^{-1})+\ln b_{i+1}^{-1}
\end{equation*}
and the inequalities
\begin{equation*}
\ln a_{k}^{-1}>\ln b_{k}^{-1}\ge \ln a_{k+1}^{-1}> \ln b_{k+1}^{-1}\ge 0,
\end{equation*}
$k=1, ..., i+1$ imply that
\begin{equation}\label{s1}
\ln a_{1}^{-1}\ge\sum_{k=1}^{i+1}(\ln a_{k}^{-1}-\ln b_{k}^{-1}).
\end{equation}
Since $X\subseteq \mathrm{Cc}^{1}E(q),$ the intersection $(a_k, b_k)\cap E$ is nonempty for every $k=1, ..., i.$ It follows directly from the definition of $q$-blow up, that the inclusion
\begin{equation}\label{s2}
(q^{-1}x, qx)\subseteq (a_k, b_k)
\end{equation}
holds for every $x\in E\cap (a_k, b_k).$ Conditions \eqref{s1} and \eqref{s2} yield the inequalities
\begin{equation*}
\ln a_{1}^{-1}\ge\sum_{k=1}^{i+1}\ln\frac{b_k}{a_k}\ge\sum_{k=1}^{i+1}\ln q^{2}=2(i+1)\ln q.
\end{equation*}
Letting $i\to\infty,$ we obtain the equality $\ln a_{1}^{-1}=\infty,$ contrary to $(a_1, b_1)\in \mathrm{Cc}^{1}E(q).$
\end{proof}

The proof of Lemma~\ref{L15} shows, in particular, that for given $q>1$ and $(a,b)\in \mathrm{Cc}^{1}E(q),$ the set $\{(c,d)\in \mathrm{Cc}^{1}E(q): (c,d)\preceq (a,b)\}$ is finite. This finiteness together with Lemma~\ref{L15} implies the following

\begin{corollary}\label{col4.8}
Let $\varnothing\ne E\subseteq\mathbb R^{+}$ and let $q>1.$ If $\mathrm{Cc}^{1}E(q)\ne\varnothing,$ then the chain $(\mathrm{Cc}^{1}E(q), \preceq)$ is isomorphic either the first infinite ordinal number $\omega$ or an initial segment of $\omega.$
\end{corollary}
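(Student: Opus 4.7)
The plan is to derive the corollary from Lemma~\ref{L15} together with the finiteness observation noted just before the corollary statement, via a standard order-theoretic enumeration argument.

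First I would record the two ingredients. By Lemma~\ref{L15}, the chain $(\mathrm{Cc}^{1}E(q), \preceq)$ is well-ordered, so every nonempty subset admits a $\preceq$-least element. By the remark preceding the corollary, for each $(a,b)\in \mathrm{Cc}^{1}E(q)$ the principal down-set $\{(c,d)\in \mathrm{Cc}^{1}E(q): (c,d)\preceq (a,b)\}$ is finite.

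Next I would enumerate $\mathrm{Cc}^{1}E(q)$ by recursion: set $B_{0}$ to be the $\preceq$-least element of $\mathrm{Cc}^{1}E(q)$, and, for $n\geq 1$, let $B_{n}$ be the $\preceq$-least element of $\mathrm{Cc}^{1}E(q)\setminus\{B_{0}, \ldots, B_{n-1}\}$ whenever this set is nonempty. If the construction halts at some finite stage $n$, then $\mathrm{Cc}^{1}E(q) = \{B_{0}, \ldots, B_{n-1}\}$ and $k\mapsto B_{k}$ is an order isomorphism onto the initial segment $\{0,1,\ldots,n-1\}$ of $\omega$. Otherwise the map $n\mapsto B_{n}$ yields an order-preserving injection $\omega \to \mathrm{Cc}^{1}E(q)$, and I would check surjectivity: any omitted $C\in \mathrm{Cc}^{1}E(q)$ would satisfy $B_{n}\prec C$ for all $n\in \mathbb{N}$ by the minimality built into the recursive choice, placing the infinite set $\{B_{n}\}_{n\in\mathbb{N}}$ inside the finite set $\{X : X\preceq C\}$, a contradiction.

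The whole burden of the proof has already been discharged by Lemma~\ref{L15} and the associated finiteness observation; what remains is the standard fact that a well-order with all proper principal down-sets finite has order type at most $\omega$, so I do not anticipate any genuine obstacle in writing out the argument.
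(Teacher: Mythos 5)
Your proposal is correct and matches the paper's route: the paper likewise deduces the corollary from the well-orderedness of $(\mathrm{Cc}^{1}E(q), \preceq)$ (Lemma~\ref{L15}) together with the finiteness of each principal down-set noted just before the statement, leaving the standard enumeration argument implicit. You have simply written out that routine order-theoretic step explicitly, and it is sound.
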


For a set $E\subseteq\mathbb R^{+},$ we use the symbol $\mathrm{ac} E$ to denote the set of its accumulation points.

\begin{remark}\label{Rem3}
Let $E\subseteq\mathbb R^{+}$ and $q>1.$ Then $(\mathrm{Cc}^{1}E(q), \preceq)$ is isomorphic to $\omega$ if and only if $0\in \mathrm{ac} E(q)$ and $0\in \mathrm{ac} (\mathbb R^{+}\setminus E(q)).$ In particular, if $E\in \textbf{\emph{SP}},$ then $\mathrm{Cc}^{1}E(q)$ is isomorphic to $\omega$ if and only if $0\in \mathrm{ac} E.$
\end{remark}

Corollary~\ref{col4.8} means, in particular, that for every infinite $\mathrm{Cc}^{1}E(q)$ there is a unique sequence $\{(a_i, b_i)\}_{i\in\mathbb N}$ such that the logical equivalence
\begin{equation}\label{z1}((a, b)\in \mathrm{Cc}^{1} E(q))\Leftrightarrow (\exists \, i\in\mathbb N: (a,b)=(a_i, b_i))\end{equation} holds for every interval $(a, b)\subseteq\mathbb R^{+}$ and logical equivalence
\begin{equation}\label{z2}
((a_i, b_i)\prec (a_j, b_j))\Leftrightarrow (i<j)
\end{equation}
holds for all $i, j\in\mathbb N.$
If a sequence $\{(a_i, b_i)\}_{i\in\mathbb N}$ satisfies \eqref{z1} $-$ \eqref{z2} we shall write $$\mathrm{Cc}^{1} E(q)=\{(a_i, b_i)\}_{i\in\mathbb N}.$$
The following theorem is a blow up characterization of the ideal $\hat{I}(\textbf{SP}).$
\begin{theorem}\label{Th2.20}
Let $E\subseteq\mathbb R^{+}$ and $0\in \mathrm{ac} E.$ Then the following conditions are equivalent.
\begin{enumerate}
\item[\rm(i)]\textit{$E\in \hat I(\textbf{SP})$.}

\item[\rm(ii)]\textit{For every $q>1,$ the chain $\mathrm{Cc}^{1}E(q)$ is infinite, $\mathrm{Cc}^{1}E(q)=\{(a_i, b_i)\}_{i\in\mathbb N},$ and the inequality
\begin{equation}\label{th2.20eq1}
\limsup_{i\to\infty}\frac{b_i}{a_i}<\infty
\end{equation} holds. }
\end{enumerate}
\end{theorem}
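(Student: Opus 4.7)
The plan is first to apply Theorem~\ref{L2.2}: since $\textbf{SP}$ is a down set and $V(\textbf{SP}) = \mathbb{R}^+ \notin \textbf{SP}$, one obtains $\hat{I}(\textbf{SP}) = I^*(\textbf{SP})$, so (i) is equivalent to the assertion that $E \cup B \in \textbf{SP}$ for every $B \in \textbf{SP}$. This is what I will match against (ii).

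For the direction (ii) $\Rightarrow$ (i), I argue by contradiction. If some $B \in \textbf{SP}$ satisfies $E \cup B \notin \textbf{SP}$, Lemma~\ref{L13*} yields $q > 1$ and $t > 0$ with $E(q) \cup B(q) = (E \cup B)(q) \supseteq (0, t)$, whence $(0, t) \setminus B(q) \subseteq E(q)$. Lemma~\ref{L14} gives $B(q) \in \textbf{SP}$, so there exist open intervals $(c_n, d_n) \subseteq (0, h_n)$ disjoint from $B(q)$ with $h_n \to 0$, $h_n < t$, and $(d_n - c_n)/h_n \to 1$; combined with $d_n \le h_n$ this forces $c_n/d_n \to 0$. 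Connectedness of $(c_n, d_n)$ and the inclusion $(c_n, d_n) \subseteq E(q)$ put it inside a single component $(a_{i(n)}, b_{i(n)}) \in \mathrm{Cc}^1 E(q)$, giving $d_n/c_n \le b_{i(n)}/a_{i(n)}$, so $b_{i(n)}/a_{i(n)} \to \infty$, contradicting the bound in (ii).

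For the direction (i) $\Rightarrow$ (ii), the infinite-chain clause is immediate: $E \in \hat{I}(\textbf{SP}) \subseteq \textbf{SP}$ and $0 \in \mathrm{ac}\, E$, so Lemma~\ref{L14} gives $E(q) \in \textbf{SP}$ for every $q > 1$, and Remark~\ref{Rem3} then identifies $\mathrm{Cc}^1 E(q)$ with $\omega$. For the $\limsup$ clause I argue contrapositively. Fix $q_0 > 1$ with $\limsup_i b_i/a_i = \infty$ along $\mathrm{Cc}^1 E(q_0) = \{(a_i, b_i)\}_{i \in \mathbb{N}}$ and select a subsequence $(i_k)$ with $b_{i_k}/a_{i_k} \to \infty$. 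Construct $B$ by placing, in every gap $(b_{j+1}, a_j)$ of $E(q_0)$ inside $(0, 1]$, the points $\{b_{j+1} q_0^m : m \ge 1\} \cap (b_{j+1}, a_j)$. Since $B$ is disjoint from every component, the largest element of $B$ below $b_{i_k}$ does not exceed $a_{i_k}$, so $\lambda(B, 0, b_{i_k})/b_{i_k} \ge 1 - a_{i_k}/b_{i_k} \to 1$, and thus $B \in \textbf{SP}$. Meanwhile, every gap of $E \cup B$ near $0$ falls into one of three cases: strictly between two consecutive $B$-points inside a single gap of $E(q_0)$ (relative length $\le 1 - q_0^{-1}$); strictly between two consecutive $E$-points inside a single component (relative length $\le 1 - q_0^{-2}$, since consecutive $E$-points in a component have ratio $\le q_0^2$); or straddling a component boundary $a_i$ or $b_i$ (where a direct computation using the accumulation of $E$-points at $q_0 a_i$ and $b_i/q_0$ gives relative length bounded by a constant $s(q_0) < 1$). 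Hence $\limsup_{h \to 0^+} \lambda(E \cup B, 0, h)/h \le s(q_0) < 1$, so $E \cup B \notin \textbf{SP}$, contradicting $E \in I^*(\textbf{SP})$.

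The main technical obstacle is this construction of $B$: strong porosity of $B$ demands arbitrarily large relative gaps near $0$, while $E \cup B \notin \textbf{SP}$ demands uniformly bounded relative gaps. The resolution exploits that inside each component $(a_i, b_i)$ of $E(q_0)$ the set $E$ is already $q_0^2$-dense, so leaving $B$ empty on all components supplies $B$ with its porosity via the subsequence $(i_k)$, while $E$ itself prevents $E \cup B$ from opening any large gap inside any component.
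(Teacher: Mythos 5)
Your (ii)$\Rightarrow$(i) direction is essentially the paper's own argument: Theorem~\ref{L2.2} reduces (i) to membership in $I^{*}(\textbf{SP})$, and then Lemma~\ref{L13*} and Lemma~\ref{L14} produce $B(q)$-free intervals of unbounded endpoint ratio inside $(0,t)\subseteq E(q)\cup B(q)$; being connected and $B(q)$-free, they lie in single components of $E(q)$, contradicting \eqref{th2.20eq1}. Your (i)$\Rightarrow$(ii) direction, however, takes a genuinely different route. The paper first transfers the hypothesis to the blown-up set via Corollary~\ref{col} (that $E\in I^{*}(\textbf{SP})\Leftrightarrow E(q)\in I^{*}(\textbf{SP})$) and then chooses $B=\mathbb R^{+}\setminus\bigcup_{i}(a_i,b_i)$: if $\limsup_i b_i/a_i=\infty$ this $B$ lies in $\textbf{SP}$, while $B\cup E(q)\supseteq(0,b_1)$ is trivially not strongly porous, an immediate contradiction. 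You instead keep $E$ un-blown-up and build a discrete $B$ (a geometric $q_0$-mesh in the gaps between components of $E(q_0)$), which forces you to verify by hand that every $(E\cup B)$-free interval near $0$ has endpoint ratio bounded by a constant depending only on $q_0$; this does work, because inside each component $E$ is multiplicatively $q_0^{2}$-dense ($\inf(E\cap(a_i,b_i))=q_0a_i$, $\sup(E\cap(a_i,b_i))=q_0^{-1}b_i$, and any $E$-free open subinterval of $E(q_0)$ has ratio at most $q_0^{2}$), giving, say, $p^{+}(E\cup B,0)\le 1-q_0^{-3}<1$. The trade-off: your construction avoids Corollary~\ref{col} entirely, at the price of the boundary case analysis, where you should also record explicitly the sub-case of an inter-component gap of ratio at most $q_0$ containing no point of $B$ (a free interval may then straddle both $b_{i+1}$ and $a_i$, but its ratio is still at most $q_0^{3}$, so the same constant serves). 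Both arguments are correct; the paper's choice of $B$ buys a two-line contradiction, while yours buys independence from the blow-up invariance of $I^{*}(\textbf{SP})$.
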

\begin{proof}
$\textrm{(i)}\Rightarrow \textrm{(ii)}.$ In accordance with Theorem~\ref{L2.2}, the equality $\hat I(\textbf{SP})=I^{*}(\textbf{SP})$ holds, so that $(E\in\hat I(\textbf{\emph{SP}}))\Leftrightarrow (E\in I^{*}(\textbf{\emph{SP}}))$. Suppose that $E\in I^{*}(\textbf{\emph{SP}})$ and $q>1.$ Then, by Corollary~\ref{col}, $E(q)\in I^{*}(\textbf{\emph{SP}})$ holds. Since $\textbf{\emph{SP}}$ is a down set, it follows directly from the definition of $I^{*}(\textbf{\emph{SP}})$ that $I^{*}(\textbf{\emph{SP}})\subseteq \textbf{\emph{SP}}.$ Consequently, the equality $\mathrm{Cc}^{1}E(q)=\{(a_i, b_i)\}_{i\in\mathbb N}$ holds. (See Remark~\ref{Rem3}). Suppose that
\begin{equation}\label{th2.20eq2}
\limsup_{i\to\infty}\frac{b_i}{a_i}=\infty.
\end{equation}
Let us consider the set
\begin{equation*}
B:=\mathbb R^{+}\setminus\left(\bigcup_{i\in\mathbb N}(a_i, b_i)\right).
\end{equation*}
Definition~\ref{D1.1} and \eqref{th2.20eq2} imply that $B\in \textbf{\emph{SP}}.$ Consequently, by the definition of $I^{*}(\textbf{\emph{SP}})$ we must have $B\cup E(q)\in\textbf{\emph{SP}}.$ It is clear from the definition of $B$ that
\begin{equation*}
(0, b_1)\subseteq B\cup E(q).
\end{equation*}
Hence the interval $(0, b_1)$ must be strongly porous on the right at $0,$ contrary to Definition~\ref{D1.1}. Hence (i) implies (ii).

$\textrm{(ii)}\Rightarrow \textrm{(i)}.$ Suppose now, that condition (ii) holds, but $E\not\in I^{*}(\textbf{\emph{SP}}).$ Then, there is $B\in \textbf{\emph{SP}}$ such that $B\cup E\not \in \textbf{\emph{SP}}.$ By Lemma~\ref{L13*}, we can find $q>1$ and $t>0$ such that the $q$-blow-up of $B\cup E$ is a superset of the interval $(0, t),$ i. e.
\begin{equation}\label{th2.20eq3}
B(q)\cup E(q)\supseteq (0, t).
\end{equation}
Lemma~\ref{L14} shows that $B(q)\in\textbf{\emph{SP}}.$ Consequently, there is a sequence $\{(a_{j}^{*}, b_{j}^{*})\}_{j\in\mathbb N}$ of open intervals $(a_{j}^{*}, b_{j}^{*})$ such that
\begin{equation}\label{th2.20eq4}
0<a_{j}^{*}<b_{j}^{*}<\infty, \, a_{j}^{*}\downarrow 0, \, (a_{j}^{*}, b_{j}^{*})\cap B(q)=\varnothing\quad\mbox{and}\quad\lim_{j\to\infty}\frac{b_{j}^{*}}{a_{j}^{*}}=\infty
\end{equation}
hold for every $j\in\mathbb N.$ Inclusion \eqref{th2.20eq3} and relations \eqref{th2.20eq4} imply that $(a_{j}^{*}, b_{j}^{*})\subseteq E(q)$ hold for all sufficiently large $j\in\mathbb N.$ Using condition (ii) of the present lemma, we can find a subsequence $\{(a_{i_k}, b_{i_k})\}_{k\in\mathbb N}$ of the sequence $\{(a_{i}, b_{i})\}_{i\in\mathbb N},$ where $\{(a_{i}, b_{i})\}_{i\in\mathbb N}=\mathrm{Cc}^{1}E(q)$ and a subsequence $\{(a_{j_k}^{*}, b_{j_k}^{*})\}_{k\in\mathbb N}$ of the sequence $\{(a_{j}^{*}, b_{j}^{*})\}_{j\in\mathbb N}$, so that
$
(a_{j_k}^{*}, b_{j_k}^{*})\subseteq(a_{i_k}, b_{i_k})
$
for every $k\in\mathbb N.$ Consequently, we obtain
\begin{equation*}
\limsup_{i\to\infty}\frac{b_i}{a_i}\ge\limsup_{k\to\infty}\frac{b_{i_k}}{a_{i_k}}\ge\limsup_{k\to\infty}\frac{b_{j_k}^{*}}{a_{j_k}^{*}}=\lim_{j\to\infty}\frac{b_{j}^{*}}{a_{j}^{*}}=\infty,
\end{equation*}
contrary to \eqref{th2.20eq1}.
\end{proof}

\section {Ideal generated by \textbf{\emph{CSP}}} \hspace*{\parindent}

The goal of the present section is to obtain the blow up characterization of the ideal $I(\textbf{\emph{CSP}}).$









\bigskip

The following lemma is a direct consequence of Theorem~36 and Theorem~42 from \cite{DB}.

\begin{lemma}\label{L2CSP}
Let $E\subseteq\mathbb R.$ Then
$E\in \textbf{CSP}$ if and only if there
are $q>1,\, t>0$ and a decreasing sequence $\{x_n\}_{n\in\mathbb N},$ such that $x_n >0$ for all $n\in\mathbb N,$ $\mathop{\lim}\limits_{n\to\infty}\frac{x_{n+1}}{x_n}=0$ and $$E\cap (0, t)\subseteq\left(\bigcup_{n\in\mathbb N}(q^{-1}x_n, qx_n)\right)\cap (0, t).$$
\end{lemma}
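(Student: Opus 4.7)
The strategy is to combine Theorems~36 and~42 of \cite{DB}, which together provide a geometric reformulation of $\textbf{CSP}$-membership in the spirit of the right-hand side of the lemma. Before invoking them, I would make the preliminary reduction to $0\notin E$, since $E\in\textbf{CSP}\Leftrightarrow E\setminus\{0\}\in\textbf{CSP}$ and the covering condition on the right is unchanged by adding or removing the point $0$. After this reduction, the two implications can be obtained by translating back and forth between the sequence-theoretic definition of $\textbf{CSP}$ and the geometric scale sequence $\{x_n\}$.

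The reverse direction is the transparent one. Given $q>1$, $t>0$, and a positive decreasing sequence $\{x_n\}$ with $x_{n+1}/x_n\to 0$, the tail intervals $(q^{-1}x_n,qx_n)$ are pairwise disjoint; after shrinking $t$ I may assume disjointness of all those meeting $(0,t)$. For any $\tilde\tau=\{\tau_n\}\in\tilde E$, each $\tau_n$ with $\tau_n<t$ lies in a unique interval $(q^{-1}x_{k_n},qx_{k_n})$, and since $\tau_n\downarrow 0$ the index $k_n$ is non-decreasing in $n$ and tends to $\infty$. Setting $h_n:=q^{-1}x_{k_n}$, the inequalities $h_n\le\tau_n<q^{2}h_n$ yield $\tilde\tau\asymp\tilde h$, while the gap $(qx_{k_n+1},q^{-1}x_{k_n})\subseteq(0,h_n)$ is disjoint from $E$ by the covering inclusion, so
\begin{equation*}
\frac{\lambda(E,0,h_n)}{h_n}\ge\frac{q^{-1}x_{k_n}-qx_{k_n+1}}{q^{-1}x_{k_n}}=1-q^{2}\,\frac{x_{k_n+1}}{x_{k_n}}\longrightarrow 1.
\end{equation*}
Hence $\tilde h\in\tilde H(E)$, and therefore $E\in\textbf{CSP}$.

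For the forward direction I would construct the required sequence $\{x_n\}$ by choosing one representative of $E$ in each connected component of a suitable blow-up $E(q_0)\cap(0,t)$, using Lemma~\ref{L15} to well-order these components as $(a_i,b_i)_{i\in\mathbb N}$ and picking $x_i\in E\cap(a_i,b_i)$. The condition $x_{n+1}/x_n\to 0$ must then be forced by $E\in\textbf{CSP}$: otherwise a subsequence of $\{x_n\}$ with bounded ratios would be a $\tilde\tau\in\tilde E$ admitting no matching $\tilde h\in\tilde H(E)$, contradicting the definition. This is the main obstacle: the uniform extraction of $(q,t,\{x_n\})$ from the pointwise-quantified definition of $\textbf{CSP}$ requires exactly the machinery already developed in Theorems~36 and~42 of \cite{DB}, and I would invoke those results to complete the argument. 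The reverse direction, by contrast, is the transparent translation sketched above.
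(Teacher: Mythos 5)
Your proposal matches the paper, which gives no argument of its own but simply derives the lemma as a direct consequence of Theorems~36 and~42 of \cite{DB} --- exactly the results you ultimately invoke for the forward implication. Your explicit verification of the reverse implication (disjointness of the tail intervals, $h_n:=q^{-1}x_{k_n}$, the gap estimate $\lambda(E,0,h_n)/h_n\ge 1-q^{2}x_{k_n+1}/x_{k_n}\to 1$) is correct and is a harmless addition beyond what the paper records.
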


In this section, for every $n\in\mathbb N,$ we denote by $\textbf{\emph{n}}$ the set $\{1,2,...,n\}.$

\begin{lemma}\label{L14*}
Let $E\subseteq\mathbb R^{+}$ and $q>1.$ Then the following logical equivalence
\begin{equation*}
(E\in I (\textbf{CSP}))\Leftrightarrow (E(q)\in I (\textbf{CSP}))
\end{equation*}
holds.
\end{lemma}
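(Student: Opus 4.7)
The plan is to adapt the strategy of Corollary~\ref{col}, using Lemma~\ref{L2CSP} as the workhorse characterization of $\textbf{CSP}$. As in that corollary, I would start by observing $E(q) = (E \setminus \{0\})(q)$ and that membership in $I(\textbf{CSP})$ is insensitive to adjoining or removing the point $0$ (the singleton $\{0\}$ lies in $\textbf{CSP}$ vacuously, so $\{0\} \in I(\textbf{CSP})$, and ideals are closed under finite unions and subsets). Thus I may assume throughout that $0 \notin E$, in which case Remark~\ref{rem4.5} gives $E(q) \supseteq E$. The backward implication $(E(q) \in I(\textbf{CSP})) \Rightarrow (E \in I(\textbf{CSP}))$ then follows immediately from the fact that $I(\textbf{CSP})$ is an ideal and hence a down set.

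For the forward implication, I would suppose $E \in I(\textbf{CSP})$ and write $E = \bigcup_{j=1}^{n} E_j$ with each $E_j \in \textbf{CSP}$. Since the $q$-blow up distributes over unions, $E(q) = \bigcup_{j=1}^{n} E_j(q)$, so it suffices to verify that $\textbf{CSP}$ itself is preserved under $q$-blow up. Fix $j$ and apply Lemma~\ref{L2CSP} to $E_j$: there exist $q_j > 1$, $t_j > 0$, and a decreasing positive sequence $\{x_n\}_{n \in \mathbb N}$ with $x_{n+1}/x_n \to 0$ such that
$$E_j \cap (0, t_j) \subseteq A_j \cap (0, t_j), \qquad A_j := \bigcup_{n \in \mathbb N} (q_j^{-1} x_n,\, q_j x_n).$$
Lemma~\ref{L(new1)} then yields $E_j(q) \cap (0, t_j q^{-1}) \subseteq A_j(q) \cap (0, t_j q^{-1})$, and Lemma~\ref{L13}(ii) identifies each blown-up component interval as $((q q_j)^{-1} x_n,\, (q q_j) x_n)$. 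Reapplying Lemma~\ref{L2CSP} with the new constants $q' = q q_j > 1$, $t' = t_j q^{-1} > 0$, and the same sequence $\{x_n\}$ (whose defining ratio condition $x_{n+1}/x_n \to 0$ is intrinsic to the sequence) gives $E_j(q) \in \textbf{CSP}$. Taking the union over $j$ yields $E(q) \in I(\textbf{CSP})$.

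No substantive obstacle is expected: Lemma~\ref{L2CSP} does essentially all of the work, and the only bookkeeping is tracking how the outer dilation $q_j$ is absorbed into the combined constant $q q_j$ and how the threshold $t_j$ shrinks to $t_j q^{-1}$ under blow-up. The argument is strictly parallel to Corollary~\ref{col} but leverages the sequence structure supplied by Lemma~\ref{L2CSP} in place of the simple additive closure of $\textbf{SP}$ exploited there.
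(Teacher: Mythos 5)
Your proposal is correct and follows essentially the same route as the paper's proof: reduce to $0\notin E$, get the backward implication from $E\subseteq E(q)$ and the down-set property, and for the forward implication decompose $E$ into finitely many $\textbf{\emph{CSP}}$ sets and show each blow-up stays in $\textbf{\emph{CSP}}$ by combining Lemma~\ref{L2CSP} with Lemma~\ref{L(new1)} and Lemma~\ref{L13}(ii), absorbing the dilation into the constant $qq_j$ and shrinking the threshold to $t_jq^{-1}$. This matches the paper's argument step for step.
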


\begin{proof}
As in the proof of Lemma~\ref{L14}, we may assume that $0\not\in E.$ In accordance with Remark~\ref{rem4.5}, this
assumption implies the inclusion
\begin{equation}\label{eq4.15}
E\subseteq E(q).
\end{equation}
Now the implication $$(E(q)\in I(\textbf{\emph{CSP}}))\Rightarrow (E \in I(\textbf{\emph{CSP}}))$$ follows from \eqref{eq4.15}, because $I(\textbf{\emph{CSP}})$ is a down set. To prove the converse implication suppose that $E\in I(\textbf{\emph{CSP}}).$ Then there are $B_1,..., B_n \in \textbf{\emph{CSP}},$ such that $E=B_1 \cup ...\cup B_n.$ The last equality implies that $E(q)=B_{1}(q)\cup... \cup B_{n}(q).$ Consequently $E(q)\in I(\textbf{\emph{CSP}})$ holds if
$B_{j}(q)\in \textbf{\emph{CSP}}$
for every $j\in \textbf{\emph{n}}.$ By Lemma~\ref{L2CSP}, for every $j\in \textbf{\emph{n}},$ we can find $q_j >1,$ $t_j>0,$ and a decreasing sequence $\{x_{k,j}\}_{k\in\mathbb N}$ of positive numbers such that $\mathop{\lim}\limits_{k\to\infty}\frac{x_{k+1,j}}{x_{k, j}}=0$ and
\begin{equation}\label{eq4.17}
(0,t_j)\cap B_{j}\subseteq (0,t_j)\cap \bigcup_{k\in\mathbb N}(q_{j}^{-1}x_{k,j}, q_{j}x_{k,j}).
\end{equation}
Statement (ii) of Lemma~\ref{L13}, Lemma~\ref{L(new1)} and \eqref{eq4.17} imply $$(0,t_{j}q^{-1})\cap B_{j}(q)\subseteq (0,t_{j}q^{-1})\cap \bigcup_{k\in\mathbb N}(q^{-1}q_{j}^{-1}x_{k,j}, qq_{j}x_{k,j}).$$ Hence, by Lemma~\ref{L2CSP}, the statement $B_{j}(q)\in \textbf{\emph{CSP}}$ holds for every $j\in \textbf{\emph{n}}.$
\end{proof}

\begin{lemma}\label{L16}
Let $E \subseteq \mathbb R^{+},$ $q>1$ and let $\mathrm{Cc}^{1} E(q)=\{(a_i, b_i)\}_{i\in\mathbb N}.$ Suppose that

\begin{equation}\label{eq4.24}
\limsup_{i\to\infty}\frac{b_i}{a_{i}}<\infty
\end{equation}
and there is $N\in\mathbb N$ such that

\begin{equation}\label{eq4.25}
\lim_{n\to\infty}\bigvee_{j=0}^{N}\frac{a_{n+j}}{b_{n+j+1}}=\infty
\end{equation}
where
\begin{equation*}
\bigvee_{j=0}^{N}\frac{a_{n+j}}{b_{n+j+1}}=\max\left\{ \frac{a_n}{b_{n+1}}, \frac{a_{n+1}}{b_{n+2}}, ..., \frac{a_{n+N}}{b_{n+N+1}} \right\}.
\end{equation*}
Then there are $B_1,..., B_{2N+2} \in \textbf{CSP}$ such that

\begin{equation}\label{eq4.26}
E\subseteq B_1 \cup ... \cup B_{2N+2}.
\end{equation}

\end{lemma}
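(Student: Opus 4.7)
The plan is to cover $E$ near $0$ by partitioning the chain $\mathrm{Cc}^{1} E(q) = \{(a_i, b_i)\}_{i\in\mathbb N}$ into $N+1$ subsequences, one per residue class modulo $N+1$, and showing that each gives a $\textbf{CSP}$ set via Lemma~\ref{L2CSP}; since $N+1 \le 2N+2$, padding the remaining slots with $\varnothing$ (and absorbing the bounded-away-from-$0$ remainder of $E$ into any one of them) completes the proof.

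First I would record the basic geometric facts. Since $(a_i, b_i) \prec (a_{i+1}, b_{i+1})$ in $\mathrm{Cc}^{1} E(q)$, the chain satisfies $b_{i+1} \le a_i < b_i$, and by hypothesis~\eqref{eq4.24} there exist $i_0 \in \mathbb N$ and $C \ge 1$ with $b_i/a_i \le C$ for all $i \ge i_0$; put $Q := \sqrt{C}$. Setting $r_i := a_i/b_{i+1} \ge 1$, hypothesis~\eqref{eq4.25} becomes $\max_{0 \le j \le N} r_{n+j} \to \infty$ as $n \to \infty$.

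Next, for each $\ell \in \{1, \dots, N+1\}$, enumerate the residue class $G_\ell := \{i \in \mathbb N : i \equiv \ell \pmod{N+1}\}$ in increasing order as $\{i_k^\ell\}_{k \in \mathbb N}$, so that $i_{k+1}^\ell - i_k^\ell = N + 1$. Define $x_k^\ell := \sqrt{a_{i_k^\ell}\, b_{i_k^\ell}}$ and $B_\ell := \bigcup_{k \in \mathbb N} (a_{i_k^\ell}, b_{i_k^\ell})$. The core computation is the telescoping identity
\[
\frac{a_{i_k^\ell}}{b_{i_{k+1}^\ell}} \;=\; \Bigl(\prod_{j=i_k^\ell}^{i_{k+1}^\ell-1} r_j\Bigr) \cdot \Bigl(\prod_{j=i_k^\ell+1}^{i_{k+1}^\ell-1} \frac{b_j}{a_j}\Bigr) \;\ge\; \max_{0 \le j \le N} r_{i_k^\ell + j},
\]
whose right-hand side tends to $\infty$ as $k \to \infty$ by \eqref{eq4.25}. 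Combined with the trivial chain $a_{i_{k+1}^\ell} \le b_{i_{k+1}^\ell} \le a_{i_k^\ell} \le b_{i_k^\ell}$, this yields $x_{k+1}^\ell / x_k^\ell \le b_{i_{k+1}^\ell}/a_{i_k^\ell} \to 0$. The choice of $Q$ ensures $(Q^{-1} x_k^\ell, Q x_k^\ell) \supseteq (a_{i_k^\ell}, b_{i_k^\ell})$ for all $k$ with $i_k^\ell \ge i_0$, so Lemma~\ref{L2CSP} (applied with a sufficiently small $t$) yields $B_\ell \in \textbf{CSP}$.

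Finally, arguing as in the proof of Lemma~\ref{L14*} I may assume $0 \notin E$, so $E \subseteq E(q)$; near $0$ the set $E(q)$ coincides with $\bigcup_i (a_i, b_i) = \bigcup_{\ell=1}^{N+1} B_\ell$, while the part of $E$ bounded away from $0$ is trivially in $\textbf{CSP}$ and can be absorbed into $B_1$; padding with empty sets gives the required $2N+2$ members. The main obstacle is the telescoping inequality displayed above: it exploits simultaneously the monotone structure of $\prec$, the factors $b_j/a_j \ge 1$, and the uniform lower bound $\max_{0\le j\le N} r_{i_k^\ell + j}$ matching exactly the asymptotic hypothesis~\eqref{eq4.25}; once this estimate is in hand, the appeal to Lemma~\ref{L2CSP} is routine.
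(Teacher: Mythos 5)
Your proof is correct, but it takes a genuinely different route from the paper's. You split the chain $\{(a_i,b_i)\}_{i\in\mathbb N}$ into the $N+1$ residue classes modulo $N+1$ and show, via the telescoping identity $a_m/b_{m+N+1}=\prod_{j=m}^{m+N}(a_j/b_{j+1})\cdot\prod_{j=m+1}^{m+N}(b_j/a_j)\ge\bigvee_{j=0}^{N}a_{m+j}/b_{m+j+1}$, that within each class the gap ratios between consecutive selected components tend to infinity; then \eqref{eq4.24} lets you inflate each selected component to an interval $(Q^{-1}x,Qx)$ around its geometric mean, and Lemma~\ref{L2CSP} gives membership in $\textbf{\emph{CSP}}$. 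The paper instead chooses, in each consecutive block $F_k$ of $N+1$ indices, the index $m_k$ with maximal gap ratio, groups the components lying between consecutive "large" gaps into families $\mathfrak F_k$ of size at most $2N+1$, distributes them into $2N+1$ sets $B_j$ plus a tail set $\{0\}\cup(a_{m_1},\infty)$, and verifies $B_j\in\textbf{\emph{CSP}}$ directly from Definition~\ref{D1.2} using the left endpoints as the comparison sequence. Your decomposition is cleaner and more economical: it avoids the bookkeeping with $F_k$, $m_k$ and $\mathfrak F_k$, and it actually yields the sharper conclusion that $N+2$ sets suffice (the $N+1$ residue-class sets plus one absorbing $\{0\}$ and the part of $E$ bounded away from $0$), well within the stated $2N+2$; both arguments use the hypotheses in the same way (\eqref{eq4.24} for endpoint comparability, \eqref{eq4.25} for the exploding gaps). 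Two cosmetic points you already essentially handle: take $C>1$ so that $Q=\sqrt{C}>1$ as Lemma~\ref{L2CSP} requires, and note that $b_i\to 0$ (forced by \eqref{eq4.25}), so a sufficiently small $t$ indeed reduces $E(q)\cap(0,t)$ to the components in $\mathrm{Cc}^{1}E(q)$ with large index.
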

\begin{proof}
Suppose $N\in\mathbb N$ is a number such that \eqref{eq4.25} holds. Let us define a sequence $\{F_k\}_{k\in\mathbb N}$ of sets $F_{k}\subseteq\mathbb N$ as $F_{1}:=\{1, ..., N+1\},$ $F_{2}:=\{(N+1)+1, ..., 2(N+1)\},$ $F_{3}:=\{2(N+1)+1, ..., 3(N+1)\}$ and so on. It is clear that $\mathop{\bigcup}\limits_{k=1}^{\infty}F_{k}=\mathbb N$ and $F_{k_1}\cap F_{k_2}=\varnothing$ if $k_1 \ne k_2,$ and
\begin{equation}\label{eq4.27}
|F_{k}|=N+1 \quad\mbox{for every $k\in\mathbb N.$}
\end{equation}
Let $m_k \in F_k$ be a number satisfying the condition

\begin{equation}\label{eq4.28}
\frac{a_{m_k}}{b_{m_{k}+1}}=\bigvee_{n\in F_k}\frac{a_n}{b_{n+1}}.
\end{equation}
It follows from \eqref{eq4.25}, \eqref{eq4.27} and \eqref{eq4.28} that

\begin{equation}\label{eq4.29}
\lim_{k\to\infty}\frac{a_{m_k}}{b_{m_{k}+1}}=\infty.
\end{equation}
The definition of $F_k$ and \eqref{eq4.27} imply the double inequality

\begin{equation}\label{eq4.30}
1\le m_{k+1}-m_{k}\le 2N+1.
\end{equation}
For every $k\in\mathbb N$ denote by $\mathfrak{F}_k$ the set of all connected components of $E(q),$ which lie between $[b_{m_{k}+2}, a_{m_{k}+1}]$ and $[b_{m_{k}+1}, a_{m_{k}}],$

\begin{equation}\label{eq4.31}
\mathfrak{F}_k:=\{(a_{n}, b_{n}): [b_{m_{k}+2}, a_{m_{k}+1}]\succ (a_{n}, b_{n})\succ [b_{m_{k}+1}, a_{m_{k}}]\}.
\end{equation}
It easy to show that
\begin{equation}\label{eq4.32}
\bigcup_{k=m_1}^{\infty}(a_{k+1}, b_{k+1})=\bigcup_{k=1}^{\infty}\mathfrak F_{k}
\end{equation}
and $\mathfrak F_{i}\cap \mathfrak F_{j}=\varnothing$ if $i\ne j.$
From \eqref{eq4.30} it also follows that $1\le |\mathfrak F_{k}|\le 2N+1$ for every $k\in\mathbb N.$
Consequently, for every $k\in\mathbb N,$ the elements of $\mathfrak F_{k}$ can be numbered (with some repetitions if it is necessary) in a finite sequence $(a_{k,1}, b_{k,1}), \, (a_{k,2}, b_{k,2}), ..., (a_{k,2N+1}, b_{k,2N+1}).$ Using the inclusion $$E(q)\subseteq \bigcup_{n=1}^{\infty}(a_{n+1}, b_{n+1})\cup (a_1, \infty)$$ and \eqref{eq4.32} we obtain

\begin{equation}\label{eq4.33}
E(q)\subseteq \bigcup_{k\in\mathbb N}\left(\bigcup_{j=1}^{2N+1}(a_{k,j}, b_{k,j})\right)\cup (a_{m_1}, \infty)
=\bigcup_{j=1}^{2N+1}\left(\bigcup_{k\in\mathbb N}(a_{k,j}, b_{k,j})\right)\cup (a_{m_1}, \infty).
\end{equation}
Write $$B_{j}:=\bigcup_{k\in\mathbb N}(a_{k,j}, b_{k,j})$$ for every $j\in \textbf{2\emph{N}+1},$ where $\textbf{2\emph{N}+1}=\{1, ..., 2N+1\},$ and put $B_{2N+2}:=\{0\}\cup (a_{m_1}, \infty).$ Now we have $E\subseteq E(q)\cup\{0\}\subseteq B_{1}\cup...\cup B_{2N+2}.$ It still remains to prove that $B_{j}\in\textbf{\emph{ CSP}}$ for $j=1,..., 2N+2.$  The statement $B_{2N+2}\in \textbf{\emph{CSP}}$ is clear. Let $j\in \textbf{2\emph{N}+1}.$ In accordance with the Definition~\ref{D1.2}, the statement $B_{j}\in \textbf{\emph{CSP}}$ holds if for every $\tilde h=\{h^{l}\}_{l\in\mathbb N}\in\tilde B_{j}$ there is $\tilde a=\{a^{l}\}_{l\in\mathbb N}\in\tilde H(B_j)$ such that $\tilde h\asymp \tilde a.$
Inequality \eqref{eq4.24} and the definition of $B_{j}$ imply that there is a positive constant $c>1$ such that $$a_{k,j}\le x\le ca_{k,j}$$ for every $x\in (a_{k,j}, b_{k,j})$ and every $k\in\mathbb N.$ Consequently, if $\{h^{l}\}_{l\in\mathbb N}\in\tilde B_{j},$ then we have $\{h^{l}\}_{l\in\mathbb N}\asymp\{a^{l}\}_{l\in\mathbb N},$ where, for every $l\in\mathbb N,$ $a^{l}$ is the left endpoint of the interval $(a_{k, j}, b_{k, j})$ which contains $h^l.$ Hence, $B_{j}\in \textbf{\emph{CSP}}$ holds if $\{a_{k, j}\}_{k\in\mathbb N}\in\tilde H (B_j),$ which is equivalent to
\begin{equation}\label{eq4.34}
\lim_{k\to\infty}\frac{a_{k,j}}{b_{k+1, j}}=\infty.
\end{equation}
Let us prove \eqref{eq4.34}.
It follows from \eqref{eq4.31} that $$[b_{m_{k}+2}, a_{m_{k}+1}]\succ (a_{k, j}, b_{k, j})\succ [b_{m_{k}+1}, a_{m_{k}}]$$ and $$[b_{m_{k}+3}, a_{m_{k}+2}]\succ (a_{k+1, j}, b_{k+1, j})\succ [b_{m_{k}+2}, a_{m_{k}+1}].$$
Hence we have
$$(a_{k+1, j}, b_{k+1, j})\succ [b_{m_{k}+2}, a_{m_{k}+1}]\succ (a_{k, j}, b_{k, j}) .$$
Consequently the inequality
$$\frac{a_{k,j}}{b_{k+1, j}}\le \frac{a_{m_{k+1}}}{b_{m_{k+2}}}$$
holds. The last inequality and \eqref{eq4.29} imply \eqref{eq4.34}.
\end{proof}
\begin{corollary}
Let $E\subseteq\mathbb R^{+}.$ If there are $N\in\mathbb N$ and $q>1$ so that $\mathrm{Cc}^{1} E(q)$ is infinite and conditions \eqref{eq4.24} and \eqref{eq4.25} hold, then $E\in I(\textbf{CSP}).$
\end{corollary}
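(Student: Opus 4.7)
The corollary is essentially a cleanup of Lemma~\ref{L16}: that lemma already does the heavy combinatorial work of covering $E(q)$ by $2N+2$ sets from $\textbf{\emph{CSP}}$; what remains is only to pass from a cover to membership in the ideal generated by $\textbf{\emph{CSP}}$.

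My plan is as follows. First, I note that since $\mathrm{Cc}^{1}E(q)$ is infinite and, by Lemma~\ref{L15}, well-ordered under $\preceq$, Corollary~\ref{col4.8} lets me enumerate $\mathrm{Cc}^{1}E(q)=\{(a_i,b_i)\}_{i\in\mathbb N}$ with $(a_i,b_i)\prec(a_j,b_j)\Leftrightarrow i<j$. Thus the hypotheses \eqref{eq4.24} and \eqref{eq4.25} are exactly the hypotheses of Lemma~\ref{L16}, and that lemma supplies sets $B_1,\dots,B_{2N+2}\in\textbf{\emph{CSP}}$ with
\begin{equation*}
E\subseteq B_1\cup\dots\cup B_{2N+2}.
\end{equation*}

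Second, to turn this cover of $E$ into an exact decomposition, I set $C_j:=E\cap B_j$ for $j=1,\dots,2N+2$. Since $\textbf{\emph{CSP}}$ is a down set (Remark~\ref{Rem1}) and $C_j\subseteq B_j\in\textbf{\emph{CSP}}$, each $C_j$ lies in $\textbf{\emph{CSP}}$. The inclusion $E\subseteq B_1\cup\dots\cup B_{2N+2}$ gives
\begin{equation*}
E=\bigcup_{j=1}^{2N+2}(E\cap B_j)=\bigcup_{j=1}^{2N+2}C_j,
\end{equation*}
which, by formula \eqref{E1.7} defining the ideal generated by a down set, yields $E\in I(\textbf{\emph{CSP}})$.

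Since every nontrivial estimate has already been absorbed into Lemma~\ref{L16}, there is no real obstacle here; the only points to watch are (a) justifying that an infinite well-ordered chain of type at most $\omega$ is in fact of type $\omega$, which is immediate from Corollary~\ref{col4.8}, and (b) recording that $\textbf{\emph{CSP}}$ is a down set so that intersecting with $E$ preserves membership. Both are standard ingredients already in place in the paper.
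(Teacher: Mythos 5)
Your proposal is correct and matches the paper's intent: the paper states this corollary without proof, treating it as an immediate consequence of Lemma~\ref{L16}, and your deduction (enumerate the infinite chain via Corollary~\ref{col4.8}, apply Lemma~\ref{L16}, then pass from the cover to membership in $I(\textbf{\emph{CSP}})$) is exactly that routine step. Your intersection trick $C_j:=E\cap B_j$ is fine, though one could equally finish by noting that $B_1\cup\dots\cup B_{2N+2}\in I(\textbf{\emph{CSP}})$ and that $I(\textbf{\emph{CSP}})$, being an ideal, is a down set.
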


In the following lemma, as in Lemma~\ref{L16}, the equality $\mathrm{Cc}^{1} E(q)=\{(a_i, b_i)\}_{i\in\mathbb N}$ means that conditions \eqref{z1} and \eqref{z2} are satisfied.

\begin{lemma}\label{L2.3}
Let $E\in I(\textbf{CSP})$ and let $0\in \mathrm{ac}E.$ Then $\mathrm{Cc}^{1} E(q)=\{(a_i, b_i)\}_{i\in\mathbb N}$ for every $q>1,$ and there are $q_{0}>1$ and $M\in\mathbb N$ such that the conditions
\begin{equation}\label{L2.3eq1}
\limsup_{i\to\infty}\frac{b_i}{a_i}<\infty
\end{equation}
and
\begin{equation}\label{L2.3eq2}
\lim_{n\to\infty}\bigvee_{j=0}^{M}\frac{a_{n+j}}{b_{n+j+1}}=\infty
\end{equation}
hold for every $q> q_0.$
\end{lemma}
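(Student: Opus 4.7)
The plan is to represent $E$ as a finite union $E = B_1 \cup \cdots \cup B_n$ with each $B_j \in \textbf{CSP}$, apply Lemma~\ref{L2CSP} to each $B_j$ to obtain parameters $p_j > 1$, $t_j > 0$ and positive decreasing sequences $\{x_{k,j}\}_{k \in \mathbb N}$ with $x_{k+1,j}/x_{k,j} \to 0$ and $B_j \cap (0, t_j) \subseteq \bigcup_k (p_j^{-1} x_{k,j}, p_j x_{k,j})$, and then pass to a single merged decreasing sequence $\{Z_m\}_{m \in \mathbb N}$ of all these centers. By discarding those $x_{k,j}$ whose covering interval is disjoint from $B_j$, I may assume every $(p_j^{-1} x_{k,j}, p_j x_{k,j})$ actually meets $B_j$. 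For any $q > 1$, Lemma~\ref{L(new1)} combined with statement (ii) of Lemma~\ref{L13} yields $E(q) \cap (0, T) \subseteq \bigcup_{j,k} I_{k,j}$ with $I_{k,j} := ((qp_j)^{-1} x_{k,j}, (qp_j) x_{k,j})$ and $T := q^{-1} \min_j t_j$. Finally set $q_0 := \max_j p_j$ and, for $q > 1$, $P := q \max_j p_j$; all three conclusions will be reduced to a single pigeonhole fact about $\{Z_m\}$.

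The pigeonhole claim is: for every $C > 0$ there is $m_\ast \in \mathbb N$ such that $\max_{i=0,\ldots,n-1} Z_{m+i}/Z_{m+i+1} > C$ for all $m \ge m_\ast$. I would prove it by contradiction: if all $n$ consecutive ratios in the window are $\le C$, then $Z_m/Z_{m+n} \le C^n$, but among the $n+1$ entries $Z_m,\ldots,Z_{m+n}$ two must originate from the same sequence $\{x_{\cdot,j}\}$, say $Z_{m+a} = x_{k_1,j}$ and $Z_{m+b} = x_{k_2,j}$ with $k_1 < k_2$, giving $Z_{m+a}/Z_{m+b} \ge x_{k_2 - 1, j}/x_{k_2, j} \to \infty$ as $k_2 \to \infty$, which for large $m$ exceeds $C^n$. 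Applied with $C = P^2$ the claim shows that in any window of $n + 1$ consecutive $Z_m$'s (with $m$ large) some pair of consecutive intervals $I_{\cdot,\cdot}$ is disjoint, so the cover $\bigcup_{j,k} I_{k,j}$ has infinitely many gaps accumulating at $0$. Since $E(q) \cap (0, T) \subseteq \bigcup_{j,k} I_{k,j}$, this forces $0 \in \mathrm{ac}(\mathbb R^+ \setminus E(q))$; combined with $0 \in \mathrm{ac} E \subseteq \mathrm{ac} E(q)$ and Remark~\ref{Rem3}, it follows that $\mathrm{Cc}^1 E(q) = \{(a_i, b_i)\}_{i \in \mathbb N}$ for every $q > 1$.

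Now fix $q > q_0$, so that $q^2 > p_j^2$ for every $j$. Any two blobs $(q^{-1} x, qx)$ and $(q^{-1} x', qx')$ whose centers $x, x' \in B_j$ lie in the same interval $(p_j^{-1} x_{k,j}, p_j x_{k,j})$ then overlap, so every $I_{k,j}$ contributes to exactly one component of $E(q)$ and distinct components have disjoint sets of associated $Z_m$'s, partitioning $\mathbb N$ into contiguous blocks one per component. The pigeonhole claim with $C = P^2$ also forces every cluster of pairwise overlapping $I_{k,j}$'s (for large indices) to contain at most $n$ members, so each $(a_i, b_i)$ lies in a single cluster of ratio at most $P^{2n}$, giving $\limsup_i b_i/a_i \le P^{2n} < \infty$. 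Setting $M := n$, any window $\{n_0, \ldots, n_0 + M\}$ of $M+1 = n+1$ consecutive components is supported on a contiguous block of at least $n+1$ consecutive $Z_m$'s, so the pigeonhole claim (with $C$ arbitrarily large, choosing $n_0$ accordingly) yields a pair $Z_{m^\ast}, Z_{m^\ast+1}$ inside this block with $Z_{m^\ast}/Z_{m^\ast+1}$ as large as we please. Since the ratio exceeds $P^2$, the two $Z$'s sit in two different adjacent components $i_a$ and $i_a + 1$, both still inside the window, and $a_{i_a}/b_{i_a+1} \ge P^{-2} Z_{m^\ast}/Z_{m^\ast+1} \to \infty$ as $n_0 \to \infty$, which gives \eqref{L2.3eq2}. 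The main obstacle is managing the bookkeeping between the three layers --- the merged $Z$-sequence, the clusters of overlapping $I_{k,j}$'s, and the components $(a_i, b_i)$ of $E(q)$ --- so that a large consecutive $Z$-ratio supplied by the pigeonhole claim is actually realised as an inter-component ratio $a_i/b_{i+1}$ whose index $i$ still lies in the prescribed window.
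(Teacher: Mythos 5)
Your proposal is correct, and it reorganizes the argument around a different mechanism than the paper's, although both start identically: write $E=B_1\cup\dots\cup B_N$ with each $B_j$ completely strongly porous, apply Lemma~\ref{L2CSP} to each $B_j$, and blow up. From there the paper attaches to each point $x$ near $0$ the index set $\mathbf{J}_x\subseteq\{1,\dots,N\}$ of those $j$ with $x\in B_j(q)$, shows that $\mathbf{J}_x,\mathbf{J}_{k^{-1}x},\dots,\mathbf{J}_{k^{-N}x}$ are pairwise disjoint for suitable $k\in(q^3,q^{3(N+1)/N})$, and pigeonholes along this multiplicative orbit of a single point to get $k^{-N}x<a_x$, hence \eqref{L2.3eq1}; it then runs a second counting argument with index sets $\mathbf{J}_i$ attached to components to get \eqref{L2.3eq2} with $M=N$. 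You instead merge all centers $x_{k,j}$ into one decreasing sequence $\{Z_m\}$ and prove a single pigeonhole statement about windows of $n+1$ consecutive centers (two of them come from the same $B_j$, so some consecutive ratio is huge); the infinitude of $\mathrm{Cc}^1E(q)$ for every $q>1$, the bound $b_i/a_i\le P^{2n}$, and \eqref{L2.3eq2} with $M=n$ are then all read off from the resulting gaps of the cover $\bigcup_{j,k}I_{k,j}$ and the contiguity of the blocks of centers belonging to each component. This buys a unified treatment and, notably, gives the ``infinite chain for every $q>1$'' part directly from the gap structure, where the paper proves its key inequality only for $q\ge\max_j q_j^2$ and then invokes monotonicity of $E(q)$ in $q$; the price is the three-layer bookkeeping you flag, which does go through as you describe. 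When writing it up, make two points explicit: first, after discarding covering intervals disjoint from $B_j$, a set $B_j$ with $0\notin\mathrm{ac}B_j$ contributes only finitely many centers, which is harmless for the tail of $\{Z_m\}$ but should be noted (the paper absorbs such $B_j$ into the others at the outset); second, phrase the center--component association through the centers themselves: for $q>q_0$ every retained center lies within factor $p_j<q$ of a point of $B_j$ and hence lies in $E(q)$, every sufficiently small component of $E(q)$ contains at least one center, and a ratio $Z_{m}/Z_{m+1}>P^2$ produces a gap of the cover, hence of $E(q)\cap(0,T)$, strictly between the two centers --- this is what actually justifies ``different adjacent components'' (the component ratio bound is $P^{2n}$, not $P^2$) and the transfer $a_{i}/b_{i+1}\ge P^{-2}Z_{m}/Z_{m+1}$ in your final step.
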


\begin{proof}
It follows from the definition of $I(\emph{\textbf{CSP}})$ that there is $N\in\mathbb N$ such that
\begin{equation}\label{L2.3eq3}
E=B_1 \cup ... \cup B_N \quad\mbox{with some $B_1, ..., B_N\in\emph{\textbf{CSP}}.$}
\end{equation}
Let $\textbf{\emph{N}}=\{1, ...,
N\}.$ We may assume $0\in \mathrm{ac} B_{j}$ for every
$j\in\textbf{\emph{N}}.$ Indeed, if $0\not\in \mathrm{ac} B_{j}$ for all
$j\in\textbf{\emph{N}},$ then $$0\not\in \mathrm{ac} (B_1 \cup ... \cup
B_N)=\mathrm{ac} E,$$ contrary to the condition $0\in \mathrm{ac} E$. Hence, there is
$j_1 \in \textbf{\emph{N}}$ such that $0\in \mathrm{ac} B_{j_1}.$ Write
$$\textbf{\emph{J}}_{0}:=\{j\in \textbf{\emph{N}}: \mathrm{ac} B_j \not\ni
0\}, \quad\textbf{\emph{J}}_{1}:=\{j\in \textbf{\emph{N}}: \mathrm{ac} B_j \ni
0\}\quad \mbox{and} \quad  B_{j}':=
B_{j}\cup\left(\mathop{\bigcup}\limits_{i\in\textbf{\emph{J}}_{0}}B_i\right)$$
for every $j\in\textbf{\emph{J}}_{1}.$ Renumbering the elements of $\textbf{\emph{N}},$ we may also assume that $\textbf{\emph{J}}_{1}=\{1, ..., N_1\}$ with $N_{1}\le N.$ Then
the representation
$$E=B_{1}'\cup ...\cup B_{N_1}' $$ holds with $B_{j}'\in
\textbf{\emph{CSP}}$ and $\mathrm{ac} B_{j}'\ni 0$ for every $j\in
\textbf{\emph{N}}_{1}.$ Without loss of generality, we put $\textbf{\emph{N}}_{1}=\textbf{\emph{N}}$ and $B_{j}=B_{j}'$ for every $j\in \textbf{\emph{N}}_{1}.$

Using Lemma~\ref{L2CSP}, for every $j\in \textbf{\emph{N}},$ we can find
$q_{j}\in (1, \infty)$ and a strictly decreasing sequence $\{x_{j,
n}\}_{n\in\mathbb N}$ with
\begin{equation}\label{L2.3eq4}
\lim_{n\to\infty}\frac{x_{j, n+1}}{x_{j, n}}=0 \quad \mbox{and}
\quad \lim_{n\to\infty}x_{j, n}=0,
\end{equation} so that the inclusion
\begin{equation}\label{L2.3eq6}
B_{j}\cap (0, x_{j, 1})\subseteq\bigcup_{n\in\mathbb N}(q_{j}^{-1}x_{j, n},
q_{j}x_{j, n})
\end{equation}
holds.
Write
\begin{equation}\label{L2.3eq7}
B_{j, n}:=B_{j}\cap(q_{j}^{-1}x_{j, n}, q_{j}x_{j, n})
\end{equation}
for all $n\in\mathbb N$ and $j\in \textbf{\emph{N}},$ and define
\begin{equation}\label{L2.3eq8}
B_{j, 0}:=B_{j}\cap[q_{j}x_{j, 1}, \infty)
\end{equation}
for every $j\in \textbf{\emph{N}}.$ Inclusion \eqref{L2.3eq6} implies that
\begin{equation}\label{L2.3eq9}
B_{j}\setminus\{0\}=\bigcup_{n=0}^{\infty}B_{j, n}
\end{equation}
and from \eqref{L2.3eq3} it follows that
\begin{equation}\label{L2.3eq10}
E\setminus\{0\}=\bigcup_{j=1}^{N}\left(\bigcup_{n=0}^{\infty}B_{j,
n}\right).
\end{equation}
Replacing the sequences $\{x_{j, n}\}_{n\in\mathbb N}$ by suitable
subsequences, we may assume that
\begin{equation}\label{L2.3eq11}
B_{j, n}\ne\varnothing\quad\mbox{for every $j\in \textbf{\emph{N}}$ and $n\in\mathbb N.$}
\end{equation}
Recall that $0\in \mathrm{ac} B_{j}$ holds for every $j\in\textbf{\emph{N}}.$ Let
$q\ge\mathop{\bigvee}\limits_{j=1}^{N}q_{j}^{2}.$ The statement (i) of
Lemma~\ref{L13} and \eqref{L2.3eq11} imply that $B_{j, n}(q)$ are open
intervals. Write
\begin{equation}\label{L2.3eq12}
B_{j, n}(q):=(r_{j, n}, s_{j, n}), \quad n\in\mathbb N, \, j\in
\textbf{\emph{N}}.
\end{equation}
Consequently,
from statement (ii) of Lemma~\ref{L13} and $$B_{j, n}\subseteq (q_{j}^{-1}x_{j, n}, q_{j}x_{j, n}) \quad
\mbox{and} \quad q\ge\bigvee_{j=1}^{N}q_{j}^{2}$$ it follows that
$$(r_{j, n}, s_{j, n})=B_{j, n}(q)\subseteq (q^{-1}q_{j}^{-1}x_{j, n}, qq_{j}x_{j, n})\subseteq (q^{-\frac{3}{2}}x_{j, n}, q^{\frac{3}{2}}x_{j, n}).$$
Hence the inequality
\begin{equation}\label{L2.3eq13}
\frac{s_{j, n}}{r_{j, n}}\le q^{3}
\end{equation}
holds for all $n\in\mathbb N$ and $j\in \textbf{\emph{N}}.$
Since $$x_{j, n}\in (s_{j, n}, r_{j, n})\quad\mbox{and}\quad x_{j,
n+1}\in (s_{j, n+1}, r_{j, n+1}),$$ inequality \eqref{L2.3eq13} and limit
relation \eqref{L2.3eq4} imply that
\begin{equation}\label{L2.3eq14}
\lim_{n\to\infty}\frac{r_{j, n}}{s_{j, n+1}}=\infty.
\end{equation}
Hence there is $m_1\in\mathbb N$ such that
\begin{equation}\label{L2.3eq15}
\frac{r_{j, n}}{s_{j, n+1}}\ge q^{3(N+1)}
\end{equation}
holds for all $n\in \mathbb N\setminus \textbf{\emph{m}}_{\textbf{1}}$ and
$j\in \textbf{\emph{N}}.$ Using \eqref{L2.3eq13} and \eqref{L2.3eq15}, we see, in
particular, that
\begin{equation}\label{L2.3eq16}
(r_{j,n_1}, s_{j,n_1})\cap (r_{j, n_2}, s_{j, n_2})=\varnothing
\end{equation}
if $n_1, n_2 \in\mathbb N\setminus \textbf{\emph{m}}_{\textbf{1}},$ $n_1 \ne
n_2$ and $j\in \textbf{\emph{N}}.$ This disjointness together with \eqref{L2.3eq9} and
\eqref{L2.3eq12} yield
\begin{equation}\label{L2.3eq17}
B_{j}(q)=\bigcup_{n=0}^{\infty}B_{j, n}(q)=\bigcup_{n=
m_{1}+1}^{\infty}(r_{j, n}, s_{j, n})\cup O_{j, q,
m_1}
\end{equation}
for every $j\in \textbf{\emph{N}}$ with $O_{j, q,
m_1}:=B_{j}(q)\cap [r_{j, m_1},
\infty).$

Obviously, for every $x\in E(q)$ there is a unique connected component $(a_x,
b_x),$ $a_x =a_{x}(q)$ and $b_x =b_{x}(q),$ of the set $E(q)$ such that $x\in (a_x, b_x).$ As is easily seen the following statements are valid:

\medskip

\noindent$\bullet$ The chain $(\mathrm{Cc}^{1}E(q), \preceq)$ is infinite if
there is $t\in (0, \infty)$ such that $a_x >0$ for every $x\in (0,
t)\cap E(q);$
\medskip

\noindent$\bullet$ Inequality \eqref{L2.3eq1} holds if there are $t\in (0,
\infty), k\in (1, \infty)$ and $p\in\mathbb N$ such that
\begin{equation}\label{L2.3eq18}
k^{-p}x < a_x
\end{equation}
for every $x\in (0, t)\cap E(q).$

Note also that the inequalities $q_1 \ge q_2 >1$ imply  the
inclusion $E(q_1)\supseteq E(q_2).$ Thus, the inclusion $(a_x (q_1),
b_x (q_1))\supseteq (a_x (q_2), b_x (q_2))$ holds if $q_1 \ge q_2
>1.$
Consequently, to prove the first part of the lemma it is sufficient
to show that \eqref{L2.3eq18} holds if
\begin{equation}\label{L2.3eq19}
q\ge \bigvee_{j=1}^{N}q_{j}^{2} \quad\mbox{and}\quad x\in (0,
r^{1})\cap E(q)
\end{equation}
where \begin{equation}\label{n(eq)}r^{1}:=\mathop{\bigwedge}\limits_{j=1}^{N}r_{j, m_1}.\end{equation} Let $x\in\mathbb R^{+}.$ To find
$k\in (1, \infty)$ and $p\in\mathbb N,$ which satisfy \eqref{L2.3eq18}, we
define a subset $\textbf{\emph{J}}_{x}$ of $\textbf{\emph{N}}$ by the
rule:
\begin{equation}\label{rule}
(j\in \textbf{\emph{J}}_{x})\Leftrightarrow (j\in \textbf{\emph{N}}
\quad \mbox{and} \quad x\in (0, r^{1})\cap B_{j}(q)),
\end{equation}
where $r^{1}$ is defined in \eqref{n(eq)}. From \eqref{rule} it is clear
that
\begin{equation}\label{rule1}
(\textbf{\emph{J}}_{x}=\varnothing)\Leftrightarrow (x\in [r^1,
\infty) \quad\mbox{or}\quad x\in\mathbb R^{+}\setminus E(q)).
\end{equation}
Let \eqref{n(eq)} hold and let
\begin{equation}\label{L2.3eq20}
\theta\in (q^{3}, q^{3(N+1)}).
\end{equation}
We claim that if $\textbf{\emph{J}}_x\ne\varnothing\ne
\textbf{\emph{J}}_{\theta^{-1}x},$ then the equality
\begin{equation}\label{L2.3eq21}
\textbf{\emph{J}}_x\cap \textbf{\emph{J}}_{\theta^{-1}x}=\varnothing
\end{equation}
holds. Suppose contrary, that $\textbf{\emph{J}}_x\ne\varnothing\ne\textbf{\emph{J}}_{\theta^{-1}x}$ holds, but there is $j_{0}\in\textbf{\emph{N}}$ such that $j_{0}\in\textbf{\emph{J}}_x\cap\textbf{\emph{J}}_{\theta^{-1}x}.$ Then, using \eqref{rule}, we see that there are $n_1, n_2\in \mathbb N\setminus \textbf{\emph{m}}_{\textbf{1}},$ so that
\begin{equation}\label{L2.3eq22}
x\in (r_{j_0, n_2}, s_{j_0, n_2}) \quad \mbox{and} \quad \theta^{-1}x\in (r_{j_0, n_1}, s_{j_0, n_1}).
\end{equation}
If $n_1 = n_2,$ then the inequalities $r_{j_0, n_1}<\theta^{-1}x<x<s_{j_0, n_1}$ hold. Hence, we have
\begin{equation*}
\theta=\frac{x}{\theta^{-1}x}\le\frac{s_{j_0, n_1}}{r_{j_0, n_1}}.
\end{equation*}
Now, using \eqref{L2.3eq20}, we obtain
\begin{equation*}
q^3 <\theta\le\frac{s_{j_0, n_1}}{r_{j_0, n_1}},
\end{equation*}
contrary to \eqref{L2.3eq13}. Hence, $n_1\ne n_2.$ The relations $\theta^{-1}x <x$ and $n_1\ne n_2$ imply the inequality $n_1 >n_2.$ Consequently, $n_2 < n_{2}+1\le n_1.$ These inequalities and \eqref{z2}, imply
\begin{equation*}
(r_{j_0, n_2}, s_{j_0, n_2})\prec(r_{j_0, n_{2}+1}, s_{j_0, n_{2}+1})\preceq (r_{j_0, n_1}, s_{j_0, n_1}).
\end{equation*}
Hence,
\begin{equation}\label{L2.3eq23}
\theta=\frac{x}{\theta^{-1}x}\ge\frac{r_{j_0, n_{1}+1}}{s_{j_0, n_{1}}}.
\end{equation}
From \eqref{L2.3eq20} and \eqref{L2.3eq23} it follows that
\begin{equation*}
q^{3(N+1)}>\frac{r_{j_0, n_{1}+1}}{s_{j_0, n_1}},
\end{equation*}
contrary to \eqref{L2.3eq15}. Thus, \eqref{L2.3eq21} holds, if $\textbf{\emph{J}}_x\ne\varnothing$ and $\textbf{\emph{J}}_{\theta^{-1}x}\ne\varnothing.$

Now, let $k\in (q^3, q^{\frac{3(N+1)}{N}}).$ It is simple to prove that
\begin{equation*}
q^3 <k< ...<k^{N}<q^{3(N+1)}.
\end{equation*}
Hence \eqref{L2.3eq20} holds, if $\theta=k^{m}$ and $m\in \textbf{\emph{N}}.$ Consequently, if we have
\begin{equation}\label{L2.3eq24}
\textbf{\emph{J}}_{k^{-m}x}\ne\varnothing
\end{equation}
for every $m\in \textbf{\emph{N}}\cup\{0\},$
then
\begin{equation}\label{L2.3eq25}
\textbf{\emph{J}}_{k^{-m_1}x}\cap\textbf{\emph{J}}_{k^{-m_2}x}=\varnothing
\end{equation}
for all distinct $m_1, m_2\in\textbf{\emph{N}}\cup\{0\}.$ (To see it suppose $m_1 < m_2$ and replace in \eqref{L2.3eq20} $x$ and $\theta^{-1}x$ by $k^{-m_1}x$ and $k^{-(m_2-m_1)}k^{-m_1}x$ respectively.)
By \eqref{L2.3eq25}, $\textbf{\emph{J}}_x, \textbf{\emph{J}}_{k^{-1}x}, ...\textbf{\emph{J}}_{k^{-N}x}$
are disjoint subsets of $\textbf{\emph{N}}.$ Hence, if \eqref{L2.3eq24} holds, then
\begin{equation}\label{L2.3eq26}
N=|\textbf{\emph{N}}|\ge\sum_{l=0}^{N}|\textbf{\emph{J}}_{k^{-l}x}|\ge\sum_{l=0}^{N}1=N+1.
\end{equation}
This contradiction shows, that there is $l\in \textbf{\emph{N}}\cup\{0\}$ such that
\begin{equation}\label{L2.3eq27}
\textbf{\emph{J}}_{k^{-l}x}=\varnothing.
\end{equation}
Assume that $x\in (0, r^1)\cap E(q).$ By \eqref{rule}, equality \eqref{L2.3eq27} holds if and only if
\begin{equation*}
k^{-l}x\in[r^1, \infty) \quad\mbox{or}\quad k^{-l}x\in\mathbb R^{+}\setminus E(q).
\end{equation*}
Since $0<k^{-l}x<x<r^1,$ \eqref{L2.3eq27} yields that $k^{-l}x\not\in E(q).$ Since $(a_x, b_x)$ is a connected component of the set $E(q),$ it is proved, that the inequality
\begin{equation}\label{L2.3eq28}
k^{-N}x<a_x
\end{equation}
holds whenever $x\in (a_x, b_x)\in \mathrm{Cc}^{1} E(q), \, x<r^1$ and $q\ge\mathop{\bigvee}\limits_{j=1}^{N}q_{j}^{2}.$ Since $(\mathrm{Cc}^{1}E(q), \preceq)$ is infinite for every $q>1,$ the assertion \eqref{L2.3eq1} holds for
\begin{equation}\label{L2.3eqv}
q>q_{0}:=\bigvee_{j=1}^{N}q_{j}^{2}.
\end{equation}
To complete the proof it suffices to show that \eqref{L2.3eq2} holds with $M=N.$

Let \eqref{L2.3eqv} hold and let
\begin{equation*}
(a_i, b_i)\in\{(a_n, b_n)\}_{n\in\mathbb N}=\mathrm{Cc}^{1}E(q).
\end{equation*}
For $i\in\mathbb N$ define a set $\textbf{\emph{J}}_{i}\subseteq \textbf{\emph{N}}$ as
\begin{equation}\label{L2.3eq29}
\textbf{\emph{J}}_{i}:=\bigcup_{x\in(a_i, b_i)}\textbf{\emph{J}}_{x}
\end{equation}
where $\textbf{\emph{J}}_{x}$ was defined by \eqref{rule}. It follows from \eqref{L2.3eq29} and \eqref{rule1} that there is $i_{0}\in\mathbb N$ such that $\textbf{\emph{J}}_{i}\ne\varnothing$ for $i\ge i_{0},$ i. e. $$(a_i, b_i)\cap (0, r^{1})\cap E(q)\ne\varnothing,$$ for $i\ge i_0.$ Hence, without loss of generality, we may suppose that if $x\in (a_i, b_i)$ and $i\ge i_0,$ then $x< r^1.$ Consequently, for every $i\ge i_0$ there is $l\in \textbf{\emph{N}}$ such that
\begin{equation}\label{L2.3eq30}
\emph{\textbf{J}}_{i}\cap\emph{\textbf{J}}_{i+l}\ne\varnothing.
\end{equation}
In the opposite case, the sets $\emph{\textbf{J}}_{l}, \emph{\textbf{J}}_{l+1}, ..., \emph{\textbf{J}}_{i+N}$ are disjoint nonempty subsets of $\textbf{\emph{N}},$ which contradicts the equality $|\textbf{\emph{N}}|=N$. If \eqref{L2.3eqv} holds, then there are $y_{i}\in (a_i, b_i)$ and $y_{i+l}\in (a_{i+l}, b_{i+l})$ such that $\emph{\textbf{J}}_{x_i}\cap\emph{\textbf{J}}_{x_{i+l}}\ne\varnothing.$ Let $j_1\in \emph{\textbf{J}}_{x_i}\cap\emph{\textbf{J}}_{x_{i+l}}.$ Then we have $$y_i, y_{i+l}\in B_{j_1}(q).$$ Using \eqref{L2.3eq17}, we can find $(r_{j_1, n_1}, s_{j_1, n_1})$ and $(r_{j_1, n_2}, s_{j_1, n_2})$ such that $n_1 >n_2,$ $$y_{i+l}\in(r_{j_1, n_1}, s_{j_1, n_1})\quad \mbox{and} \quad y_i\in(r_{j_1, n_2}, s_{j_1, n_2}).$$ Indeed, if $n_1=n_2,$ then the points $y_i$ and $y_{i+l}$ belong to one and the same connected component of $E(q).$ Using \eqref{L2.3eq14}, we can show that
\begin{equation}\label{L2.3eq31}
\lim_{i\to\infty}\frac{y_i}{y_{i+l}}=\infty.
\end{equation}
Note also that if $b_j<r^1$ and $q\ge\mathop{\bigvee}\limits_{j=1}^{N}q_{j}^{2},$ then, using \eqref{L2.3eq28}, we can prove that
\begin{equation}\label{L2.3eq32}
k^{-N}\le\frac{a_i}{b_j}\quad\mbox{for $k\in(q^3, q^{\frac{3(N+1)}{N}}).$}
\end{equation}
Now \eqref{L2.3eq31}, \eqref{L2.3eq32} and condition $l\in \textbf{\emph{N}}$ imply \eqref{L2.3eq2} with $M=N.$
\end{proof}

Using Lemma~\ref{L16} and Lemma~\ref{L2.3}, we obtain the following blow up description of the ideal $I(\textbf{\emph{CSP}}).$
\begin{theorem}\label{Th2.19}
Let $E\subseteq\mathbb R^{+}$ and $0\in \mathrm{ac} E.$ Then the following conditions are equivalent:
\begin{enumerate}
\item[\rm(i)]\textit{$E\in I(\textbf{CSP});$}

\item[\rm(ii)]\textit{the chain $\mathrm{Cc}^{1}E(q)$ is infinite for every $q>1,$ $\mathrm{Cc}^{1} E(q)=\{(a_i, b_i)\}_{i\in\mathbb N},$ and there are $q_{0}>1$ and $M\in\mathbb N$ such that
\begin{equation*}
\limsup_{i\to\infty}\frac{b_i}{a_i}<\infty\quad \mbox{and}\quad \lim_{n\to\infty}\bigvee_{j=0}^{M}\frac{a_{n+j}}{b_{n+j+1}}=\infty \quad\mbox{for all $q>q_0.$}
\end{equation*}
}
\end{enumerate}
\end{theorem}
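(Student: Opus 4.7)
The plan is to deduce the theorem directly from the two preparatory results of this section: Lemma~\ref{L16} (together with the unlabelled corollary that immediately follows it) and Lemma~\ref{L2.3}. Between them they encode both implications, so the argument for Theorem~\ref{Th2.19} is essentially a bookkeeping step.

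For the implication (i)~$\Rightarrow$~(ii), I would simply invoke Lemma~\ref{L2.3}. Under the assumption $E\in I(\textbf{\emph{CSP}})$ with $0\in\mathrm{ac}\,E$, that lemma asserts that for every $q>1$ the chain $\mathrm{Cc}^{1}E(q)$ is infinite (so that it admits the enumeration $\{(a_i,b_i)\}_{i\in\mathbb N}$ provided by \eqref{z1}--\eqref{z2}), and it produces constants $q_{0}>1$ and $M\in\mathbb N$ such that both $\limsup_{i}b_i/a_i<\infty$ and $\lim_{n}\bigvee_{j=0}^{M}a_{n+j}/b_{n+j+1}=\infty$ hold for every $q>q_{0}$. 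This is exactly the content of~(ii).

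For the converse (ii)~$\Rightarrow$~(i), I would appeal to the corollary following Lemma~\ref{L16}. Pick any single $q>q_{0}$, where $q_{0}$ and $M$ are furnished by~(ii). Then $\mathrm{Cc}^{1}E(q)=\{(a_i,b_i)\}_{i\in\mathbb N}$ is infinite, \eqref{eq4.24} holds because $\limsup_{i}b_i/a_i<\infty$, and \eqref{eq4.25} holds with $N=M$. The hypotheses of that corollary are therefore met, and it yields $E\in I(\textbf{\emph{CSP}})$.

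Both directions are thus direct applications of results already established, and I expect no real obstacle at this stage: the difficulty of the theorem was absorbed into Lemmas~\ref{L16} and~\ref{L2.3}. The one subtlety worth flagging is the asymmetry in quantifiers between the two directions. Condition~(ii) asserts that the asymptotic bounds persist for \emph{every} $q>q_{0}$, whereas the corollary to Lemma~\ref{L16} requires them only for \emph{some} $q$; this asymmetry is consistent because $I(\textbf{\emph{CSP}})$ is invariant under blow-up by Lemma~\ref{L14*}, so a single witnessing $q$ suffices to recover membership, while membership itself propagates the blow-up conditions to all sufficiently large $q$.
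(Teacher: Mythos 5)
Your proposal is correct and matches the paper exactly: the paper gives no separate argument for Theorem~\ref{Th2.19} beyond the remark that it follows from Lemma~\ref{L16} (via its corollary) and Lemma~\ref{L2.3}, which is precisely the bookkeeping you carry out. Your note on the quantifier asymmetry (a single witnessing $q$ suffices for (ii)~$\Rightarrow$~(i), while Lemma~\ref{L2.3} supplies the bounds for all $q>q_0$) is a sensible clarification and consistent with the paper.
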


Theorem~\ref{Th2.19} and Theorem~\ref{Th2.20} imply the following corollary.
\begin{corollary}\label{Col(result)}
We have the inclusion $I(\textbf{CSP})\subseteq \hat I(\textbf{SP}).$
\end{corollary}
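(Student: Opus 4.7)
The plan is to verify condition (ii) of Theorem~\ref{Th2.20} for $E \in I(\textbf{\emph{CSP}})$ by reading it off condition (ii) of Theorem~\ref{Th2.19}, after separately handling the case $0\not\in\mathrm{ac}\,E$ which lies outside the scope of either characterization.

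If $0\not\in\mathrm{ac}\,E,$ I would argue directly. Fix $\delta>0$ with $(0,\delta)\cap E=\varnothing.$ For any $B\in\textbf{\emph{SP}}$ and $h\in(0,\delta)$ the sets $E\cup B$ and $B$ agree on $(0,h),$ so $\lambda(E\cup B,0,h)=\lambda(B,0,h)$ and therefore $p^+(E\cup B,0)=p^+(B,0)=1.$ Hence $E\cup B\in\textbf{\emph{SP}},$ i.e.\ $E\in I^{*}(\textbf{\emph{SP}}).$ Because each singleton lies in $\textbf{\emph{SP}},$ one has $V(\textbf{\emph{SP}})\supseteq(0,\infty),$ while $(0,\infty)$ is not strongly porous at $0,$ so $V(\textbf{\emph{SP}})\not\in\textbf{\emph{SP}};$ Theorem~\ref{L2.2} then identifies $I^{*}(\textbf{\emph{SP}})$ with $\hat I(\textbf{\emph{SP}}),$ and this case is done.

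Now assume $0\in\mathrm{ac}\,E.$ Applying Theorem~\ref{Th2.19}\,(i)$\Rightarrow$(ii) gives me, for every $q>1,$ an infinite enumeration $\mathrm{Cc}^1 E(q)=\{(a_i,b_i)\}_{i\in\mathbb N},$ together with constants $q_0>1$ and $M\in\mathbb N$ such that $\limsup_{i\to\infty}b_i/a_i<\infty$ for every $q>q_0.$ Theorem~\ref{Th2.20}\,(ii)$\Rightarrow$(i) will then place $E$ in $\hat I(\textbf{\emph{SP}})$ as soon as the same finiteness of $\limsup_i b_i/a_i$ is secured for \emph{every} $q>1,$ and so the only genuine step to perform is the extension of the bound to the range $1<q\le q_0.$

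The tool for that step is monotonicity of the blow up in $q$: if $1<q\le q',$ then $E(q)\subseteq E(q'),$ so each connected component of $E(q)$ is contained in a unique connected component of $E(q').$ Fixing any $q'>q_0$ and writing $\mathrm{Cc}^1 E(q')=\{(a_i',b_i')\}_{i\in\mathbb N},$ I would choose $\sigma(i)\in\mathbb N$ with $(a_i,b_i)\subseteq (a_{\sigma(i)}',b_{\sigma(i)}'),$ observe $a_{\sigma(i)}'\le a_i<b_i\le b_{\sigma(i)}',$ and conclude $b_i/a_i\le b_{\sigma(i)}'/a_{\sigma(i)}'.$ Passing to $\limsup_i$ yields the required bound. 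The only conceptual obstacle in the whole proof is this mismatch between ``$q>q_0$'' in Theorem~\ref{Th2.19} and ``every $q>1$'' in Theorem~\ref{Th2.20,}'' and the monotonicity argument resolves it without invoking any fresh porosity input.
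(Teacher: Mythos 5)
Your proof is correct and follows the same route the paper intends: the paper derives Corollary~\ref{Col(result)} simply by combining Theorem~\ref{Th2.19} with Theorem~\ref{Th2.20}, which is exactly your chain (i) of Theorem~\ref{Th2.19} $\Rightarrow$ (ii) of Theorem~\ref{Th2.19} $\Rightarrow$ (ii) of Theorem~\ref{Th2.20} $\Rightarrow$ (i) of Theorem~\ref{Th2.20}. The two points you make explicit --- extending $\limsup_i b_i/a_i<\infty$ from $q>q_0$ to all $q>1$ via the monotonicity $E(q)\subseteq E(q')$, and disposing of the case $0\notin\mathrm{ac}\,E$ through $I^{*}(\textbf{\emph{SP}})=\hat I(\textbf{\emph{SP}})$ --- are precisely the details the paper leaves implicit, and both are handled correctly.
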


The following example shows that there exists a set $E\subseteq\mathbb R^{+}$ such that $E\in\hat{I} (\textbf{\emph{SP}})$ but $E\not\in I (\textbf{\emph{CSP}}).$

\begin{example}\label{ex(result)}
Let $\alpha\in (0, 1).$ For every $j\in\mathbb N$ define positive numbers $y_{0, j}, y_{1, j}, ..., y_{j, j}$ so that
\begin{equation*}
y_{1, j}=\alpha^{1}y_{0, j}, \, y_{2, j}=\alpha^{2}y_{1, j}, \, ...,\, y_{j, j}=\alpha^{j}y_{j-1, j} \quad\mbox{and}\quad y_{0, j+1}< y_{j, j},
\end{equation*}
and
\begin{equation*}
\lim_{j\to\infty}\frac{y_{j, j}}{y_{0, j+1}}=\infty.
\end{equation*}
Write
\begin{equation*}
E=\bigcup_{j\in\mathbb N}\left(\bigcup_{k=0}^{j}\{y_{k, j}\}\right).
\end{equation*}
Let $q>1.$ Simple estimations show that $\mathrm{Cc}^{1} E(q)$ is infinite, $\mathrm{Cc}^{1} E(q)=\{(a_i, b_i)\}_{i\in\mathbb N}$ and
\begin{equation*}
\limsup_{i\to\infty}\frac{b_i}{a_i}\le\left(\frac{1}{\alpha}\right)^{m}+\left(\frac{1}{\alpha}\right)^{m-1}+ ... +\frac{1}{\alpha}+1,
\end{equation*}
where $m$ is the smallest positive whole number such that
\begin{equation}\label{exeq}
q<\left(\frac{1}{\alpha}\right)^{m}.
\end{equation}
Consequently, by Theorem~\ref{Th2.20}, we have
\begin{equation*}
E\in\hat I (\textbf{\emph{SP}}).
\end{equation*}
In accordance with Theorem~\ref{Th2.19}, the statement $E\in I (\textbf{\emph{CSP}})$ does not hold if and only if the inequality
\begin{equation*}
\liminf_{n\to\infty}\bigvee_{j=0}^{M}\frac{a_{n+j}}{b_{n+j+1}}<\infty
\end{equation*}
holds for every $q>1$ and $M\in\mathbb N.$
Let $m\in\mathbb N$ satisfy \eqref{exeq}. Then we can show that
\begin{equation*}
\liminf_{n\to\infty}\bigvee_{j=0}^{M}\frac{a_{n+j}}{b_{n+j+1}}\le\left(\frac{1}{\alpha}\right)^{m+M+1}.
\end{equation*}
Thus, $E$ does not belong to $I (\textbf{\emph{CSP}}).$
\end{example}

\medskip

\noindent\textbf{Acknowledgment.} The research of the second author was supported as a part of EUMLS project with grant agreement PIRSES$-$GA$-$2011$-$295164.

\textbf{Viktoriia Bilet}

Institute of Applied Mathematics and Mechanics of NASU, R. Luxemburg str. 74, Donetsk 83114, Ukraine

\textbf{E-mail:} biletvictoriya@mail.ru

\bigskip

\textbf{Oleksiy Dovgoshey}

Institute of Applied Mathematics and Mechanics of NASU, R. Luxemburg str. 74, Donetsk 83114, Ukraine

\textbf{E-mail:} aleksdov@mail.ru

\bigskip

\textbf{J\"{u}rgen Prestin}

Institut fuer Mathematik, Universit\"{a}t zu L\"{u}beck, Ratzeburger Allee 160, D-23562 L\"{u}beck, Germany

\textbf{E-mail:} prestin@math.uni-luebeck.de


\begin{thebibliography}{99}




\bibitem{D1} A.~Denjoy,
{\em Sur une propri\'{e}t\'{e} des s\'{e}ries trigonom\'{e}triques},
Verlag v.d. G. V. der Wie-en Natuur. Afd.,
{\bf 29} (1920), 220--232.

\bibitem{D2} A.~Denjoy,
{\em Le\c{c}ons sur le calcul des cofficients d'une s\'{e}rie trigonom\'{e}trique, Part II, M\'{e}trique et topologie d'ensembles parfaits et de fonctions},
Gauthier-Villars, Paris, 1941.

\bibitem{Dol1} E.~P.~Dol\v{z}enko,
{\em Boundary properties of arbitrary functions},
Mathematics of the USSR -- Izvestiya,
{\bf 1} (1967), 1--12.

\bibitem{DB} O.~Dovgoshey and V.~Bilet,
{\em Investigations of strong right upper porosity at a point},
Real Anal. Exchange,
{\bf 39}(1) (2013/2014), 197--228.

\bibitem{DRih} O.~Dovgoshey and J.~Riihentaus,
{\em Mean value type inequalities for quasinearly subharmonic functions},
Glasgow Math. J.,
{\bf 55}(2) (2013), 349--368.



\bibitem{H} J.~Foran and  P.~Humke,
{\em Some set-theoretic properties of $\sigma$-porous sets},
Real Anal. Exchange,
{\bf 6}(1) (1980-81), 114--119.

\bibitem{Har} E. Harzheim,
{\em Ordered Sets},
Advances in Mathamatics,  Vol. 7, Springer, New York, 2007.

\bibitem{HV} P.~Humke and T.~Vessey,
{\em Another note on $\sigma$-porous sets},
Real Anal. Exchange,
{\bf 8} (1982-83), 262--271.

\bibitem{HI} K.~Hrbacek and T.~Jech,
{\em Introduction to set theory},
Third Edition, Marcel Dekkev,1999.

\bibitem{KPS} L.~Karp, T.~Kilpel\"{a}inen, A.~Petrosyan and H.~Shahgholian,
{\em On the porosi\-ty of free boundaries in degenerate variational inequalities},
J. Differential Equations,
{\bf 164} (2000), 110--117.

\bibitem{Kech} A.~Kechris,
{\em Hereditary properties of the class of closed sets of ubiqueness for trigonometric series},
Israel J. Math.,
{\bf 73}(2) (1991), 189--198.

\bibitem{KLW} A.~Kechris, A.~Louveau and  W.~Woodin,
{\em The structure of $\sigma$-ideals of compact sets},
Trans. Amer. Math. Soc.,
{\bf 301}(1) (1987), 263--288.

\bibitem{Kh} A.~Khintchine,
{\em An investigation of the structure of measurable functions}, (Russian),
Mat. Sbornik,
{\bf 31} (1924), 265--285.

\bibitem{PR} F.~Przytycki and S.~Rohde,
{\em Porosity of Collet-Eckmann Julia sets},
Fund. Math.,
{\bf 155} (1998), 189--190.

\bibitem{Rep} M.~Repick\'{y},
{\em Porous sets and additivity of Lebesgue measure},
Real Anal. Exchange,
{\bf 15}(1) (1981-90), 282--2982.

\bibitem{SF} O.~Semenova and  A.~Florinskii,
{\em Ideals of porous sets in the real line and in metrizable topological spaces},
J. Math. Sci., New York,
{\bf 102}(5) (2000), 4508--4522; translated from Probl. Math. Anal., \textbf{20} (2000), 221--242.

\bibitem{Th} B.~S.~Thomson,
{\em Real Functions},
Lecture Notes in Mathematics, \textbf{1170},
Springer-Verlag, Berlin, Heidelberg, New York, Tokyo, 1985.

\bibitem{Tk} J.~Tkadlec,
{\em Constructions of some non-$\sigma$-porous sets on the real line},
Real Anal. Exchange,
{\bf 9}(2) (1983-84), 473--482.

\bibitem{Va} J.~V\"{a}is\"{a}l\"{a},
{\em Porous sets and quasisymmetric maps},
Trans. Amer. Math. Soc.,
{\bf 299} (1987), 525--533.

\bibitem{Z} L.~Zaji\v{c}ek,
{\em On cluster sets of arbitrary functions},
Fund. Math.,
{\bf 83} (1974), 197--217.

\bibitem{Z3} L.~Zaji\v{c}ek,
{\em Porosity and  $\sigma$-porosity},
Real Anal. Exchange,
{\bf 13} (1987-88), 314--350.

\bibitem{Z4} L.~Zaji\v{c}ek,
{\em On $\sigma$-porous sets in abstract spaces},
Abstr. Appl. Anal.,
{\bf 2005}(5) (2005), 509--534.

\bibitem{ZZ1} L.~Zaji\v{c}ek and  M. Zelen\'{y},
{\em On the complexity of some $\sigma$-ideals of $\sigma$-$P$-porous sets},
Comment Math. Univ. Carolin,
{\bf 44}(3) (2003), 531--554.

\bibitem{ZZ2} M. Zelen\'{y} and J. Pelant,
{\em The structure of the $\sigma$-ideals of $\sigma$-porous sets},
Comment Math. Univ. Carolin,
{\bf 45}(1) (2004), 37--72.







\end{thebibliography}
\end{document}